\theoremstyle{plain}
\newtheorem{Theorem}{Theorem}[section] %
\newtheorem{Lemma}[Theorem]{Lemma}
\newtheorem{Corollary}[Theorem]{Corollary}
\theoremstyle{definition}
\newtheorem{Remark}[Theorem]{Remark}
\def\leq{\leqslant}
\def\geq{\geqslant}
\def\le{\leqslant}
\def\calF{\mathcal{F}}
\def\calC{\mathcal{C}}
\def\calL{\mathcal{L}}
\def\calH{\mathcal{H}}
\theoremstyle{definition}
\newtheorem{Problem}{Problem}[section]
\numberwithin{equation}{section} 
\numberwithin{figure}{section} 
\numberwithin{table}{section} 
\def\supp{\operatorname{supp}}
\def\Reals{{\mathbb{R}}}
\def\Complexes{{\mathbb{C}}}
\def\Naturals{{\mathbb{N}}}
\def\dfrac#1#2{\lower0.15ex\hbox{\large$\textstyle\frac{#1}{#2}$}}
\def\one{\mathbbm{1}}
\begin{document}

\setcounter{page}{1}

\markboth{M.I. Isaev, R.G. Novikov}{H\"older-logarithmic stability in   Fourier synthesis 
}

\title{H\"older-logarithmic stability in  Fourier  synthesis \thanks{The first author's research is  supported by    the Australian Research  Council  Discovery Early Career Researcher Award DE200101045. 
}}
\date{}
\author{ 
Mikhail Isaev\\
\small School of Mathematics\\[-0.8ex]
\small Monash University\\[-0.8ex]
\small Clayton, VIC, Australia\\
\small\texttt{mikhail.isaev@monash.edu}
\and
Roman G. Novikov\\
\small CMAP, CNRS, Ecole Polytechnique\\[-0.8ex]
\small Institut Polytechnique de Paris\\[-0.8ex]
\small Palaiseau, France\\
\small IEPT RAS, Moscow, Russia\\
\small\texttt{novikov@cmap.polytechnique.fr}
}

\maketitle
\begin{abstract}
	We prove a H\"{o}lder-logarithmic stability estimate for the problem of finding a sufficiently regular compactly supported function $v$ on $\Reals^d$ from its Fourier transform  $\calF v$
	given on $[-r,r]^d$. 
	This estimate  relies on a H\"{o}lder stable continuation of $\calF v$
		from  $[-r,r]^d$  to a larger domain. The related reconstruction procedures are based on truncated series of Chebyshev polynomials. 
		 We also give an explicit example showing optimality of our stability estimates.
		
\noindent \\
{\bf Keywords:}    ill-posed inverse problems, H\"{o}lder-logarithmic stability,  exponential instability,
analytic continuation, Chebyshev approximation 
\\\noindent 
\textbf{AMS subject classification:} 42A38, 35R30, 49K40
\end{abstract}

\section{Introduction}\label{S:intro}

\emph{The Fourier transform} $\mathcal{F}$ is defined by 
\begin{equation*}
 	\mathcal{F} v (\xi) := 
 	\frac{1}{(2\pi)^d}\int\limits_{\mathbb{R}^d} e^{i\xi x} v(x) dx, \ \ \ \xi\in \mathbb{R}^d,
 \end{equation*}
where $v$ is a test function on $\Reals^d$ and  $d\geq 1$.  The analysis of this transform is one of the 
most developed areas of mathematics and has many applications in physics, statistics and  engineering;
see, for example,  Bracewell \cite{Bracewell2000}.
 In particular, it is well known that if  $v$ 
is integrable and compactly supported then $\calF v $ is analytic. Thus, the Fourier transform $\calF v$ and, consequently, the function $v$ are uniquely determined  
by  the values of $\calF v $ within any open non-empty domain.  However, in the case of noisy  data,  
the reconstruction can be  hard  unless the values of $\calF v$ are known in a very large domain
 or $v$ belongs to a specific class of functions (a priory information). 
In the present paper, we study how much the stability improves with respect to  the size of the domain  where $\calF v$ is given and  with respect to the regularity of $v$.

Specifically, we consider the following   problem.
\begin{Problem}\label{Problem1}
Suppose that  $v\in \calL^1(\Reals^d)$ is 
supported in a given compact set in $\Reals^d$. 
	 The values of $\mathcal{F}v$  are given on  $[-r,r]^d$, possibly with some noise.  Find $v$.
\end{Problem}
 
 Reconstructing a compactly supported function 
from its partially known Fourier transform or, equivalently, 
computing the Fourier transform of a band-limited function given within some domain
is a classical problem of the Fourier synthesis  and image processing;  see, for example,  
\cite{LRC1987, AMS2009, BM2009, Papoulis1975, CF2014, Gerchberg1974}.
%
%
It also  arises in  studies of   inverse scattering problems in the Born approximation. 
For example, a variant of Problem~\ref{Problem1} with
$\mathcal{F}v$ given on the ball $B_ { 2\sqrt{E}}$,  where 
$B_r:= \left\{\xi\in \mathbb{R}^d :  |\xi| \le r \right\}$,
can be regarded as  a linearized inverse scattering problem for the Schr\"odinger
equation with potential $v$ at fixed positive energy $E$, for $d\geq 2 $, and on the the energy interval $[0,E]$, for $d\geq 1$.
More details can be found in the recent paper \cite[Section 4]{Novikov2020}.
This version of Problem 1.1 with $\mathcal{F}v$ given on the ball $B_ {\omega_0}$
also arises, for example, in a multi-frequency inverse source problem for the homogeneous Helmholtz equation with frequencies $\omega\in [0, \omega_0]$;
see  ~\cite[Section 3]{BLT2010} for more details.

We focus on the stability  of reconstructions for Problem  \ref{Problem1}.  In particular, for a suitable function $\phi$ such that
$ \phi(\delta) \rightarrow 0$ 
as $\delta \rightarrow 0$, we show  that
\begin{align}
			\|v_1 &- v_2\|_{\calL^2(\Reals^d)} \leq \phi\left(\|\mathcal{F}v_1 - \mathcal{F} v_2\|_{\calL^\infty([-r,r]^d)}\right), \label{type1}	
\end{align}
 under the additional assumption  that $v_1-v_2$ 
 is
 sufficiently regular (more precisely, $v_1-v_2$  is from the Sobolev space $\calH^m (\Reals^d)$ for some integer $m>0$).
The function $\phi$ also depends on a priori information about  $v_1 - v_2$.
Furthermore, we propose  a  reconstruction procedure  for Problem  \ref{Problem1}  which  stability behaviour is consistent with  the function $ \phi$.


%

It is well known in the community that Problem \ref{Problem1} is  ill-posed 
in \ the sense of Hadamard; see, for example, \cite{LRC1987, AMS2009}.
 For the introduction to the theory of ill-posed  problems; see the classical  books
  by Tikhonov, Arsenin  \cite{TA1977} and by Lavrent'ev et al.  \cite{LRS1986}.
In fact, one  can show that this problem is \emph{exponentially ill-posed}  
in a similar way to the results of \cite{Mandache2001, Isaev2013++} 
using the estimates of $\epsilon$-entropy and $\epsilon$-capacity in functional spaces  that go  back to Kolmogorov and Vitushkin~\cite{Kolmogorov1959}.
To completely settle the question,
 we give an explicit example demonstrating  exponential ill-posedness of 
Problem \ref{Problem1} in Section \ref{S:ill-posed} of the present paper.   Consequently,   a logarithmic  bound is the best one could hope to get  in \eqref{type1}, in general; see  Corollary~\ref{C:log} and Theorem~\ref{T:last}.

On the other hand, in the case when $r$ is sufficiently  large (such that $\mathcal{F} v$ on $\mathbb{R}^d \setminus [-r,r]^d$ is negligible) one can approximate $v$  
in  a Lipschitz stable manner 
 by direct computation of \emph{inverse Fourier transform}
$\mathcal{F}^{-1}$:
\begin{equation}\label{eq:first}
	v(x) \approx [\mathcal{F}^{-1} w](x) := \int \limits_{\mathbb{R}^d}w(\xi) e^{-i\xi x} d \xi,
\end{equation}
 taking  $w$ equal to the given values of  $\mathcal{F} v$ in $[-r,r]^d$, and  $w \equiv 0$, 
outside $[-r,r]^d$.   However, there remains some error in this approximation even in the absence of noise.

 In the present work, we prove  a \emph{H\"older-logarithmic stability} estimate for Problem~\ref{Problem1}
 tying together the aforementioned two facts; see Theorem \ref{Theorem2}.
%
Furthermore, our   estimate 
illustrates   similar stability behaviour in more complicated non-linear  inverse problems. 
In fact,  the relationship is  closer than a mere illustration
for the reason that the monochromatic reconstruction from the scattering amplitude  in the Born approximation reduces to Problem \ref{Problem1} with $\calF v$ given on the ball $B_r$ for some 
$r>0$. In connection with this reduction, 
the logarithmic stability estimates  of the present work  
can be compared with 
  results of 
\cite{HH2001,IN2013++, HW2017}. 
For other known similar stability results  in related inverse problems,  see, for example, \cite{Alessandrini1988, BLT2010, Isaev2013+, IN2014, Isakov2011, Novikov2011, Santacesaria2015}  and references  therein.
However, to our knowledge,    H\"older-logarithmic or logarithmic stability estimates for Problem \ref{Problem1} 
  are  implied by none of the  results in the literature before the present work.

%
%

The main idea of our stable reconstruction for Problem \ref{Problem1}  is the following. First,
 we extrapolate $\calF v$ from $[-r,r]^d$ to a larger domain, which size depends on the noise level. Then, we apply the inverse Fourier transform. This leads to our second problem.
\begin{Problem}\label{Problem2}
Suppose that  $v\in \calL^1(\Reals^d)$ is  supported in a given compact set in $\Reals^d$. 
	 The values of $\mathcal{F}v$  are given on  $[-r,r]^d$, possibly with some noise.
Find $\mathcal{F} v$ on $[-R,R]^d$, where $R\geq r$.
\end{Problem}

 Problem \ref{Problem2} is equivalent to the  band-limited  extrapolation (for $d=1$), which was considered in many   works; see, for example,
 \cite{BM2009, Cadzow1979 ,CF2014, Gerchberg1974, Papoulis1975}.   A more general problem of  stable analytic continuation of a complex function was considered  in  \cite{DT2019, LRS1986, Tuan2000, Vessella1999}. In particular, 
 \cite[Theorem 1.2]{DT2019}  or  \cite[Theorem 1]{Vessella1999} 
 lead to a  \emph{H\"older stability} estimate for Problem \ref{Problem2}:    for some $0<\alpha<1$ and $c_{\alpha,R}>0$, 
 \begin{equation}\label{0type1}
	\begin{aligned}
		\|\mathcal{F}v_1 - \mathcal{F}v_2\|_{L^\infty([-R,R]^d)} \leq c_{\alpha,R} \Big(\|\mathcal{F}v_1 - \mathcal{F} v_2\|_{L^\infty([-r,r]^d)}\Big)^{\alpha}.
	\end{aligned}
\end{equation}
However,  for a fixed $\alpha$, the factor   $c_{\alpha, R}$   in this estimate grows exponentially  as $R$ increases, which  hinders continuation of $\calF v|_{[-r,r]^d}$ to very large domains. This behaviour is natural due to exponential ill-posedness of Problem \ref{Problem1}.

 In this paper, we continue these studies by establishing  H\"older stability  estimates  for  a multidimensional  extrapolation procedure  for   Problem~\ref{Problem2}  based on the Chebyshev polynomials; see  formula \eqref{0type2}  and Theorem  \ref{Theorem3}. It is essential that we give these estimates with explicit constants. 
   Then, by choosing an appropriate $R$, we apply this  result to Problem   \ref{Problem1};
   see   formula~\eqref{type2}, Corollary~\ref{Corollary_C}, 
   and  the proof of Theorem \ref{Theorem2}.
  
%
%
%
%

For a fixed $r>0$, we consider  the following    family of   extrapolation procedures  $\mathcal{C}_{R,n}[\cdot]$ depending on two parameters 
$R\geq r$ and 
$n \in \mathbb{N}:=\{0,1,\ldots\}$.
%
 For a  function $w$  on $[-r,r]^d$,  define
\begin{equation}\label{def_C}
	{\mathcal C}_{R,n} [w](\xi) := 
	\begin{cases}
	 w(\xi), & \xi \in [-r,r]^d,\\\displaystyle
	\sum \limits_{\substack{
	 k_1,\ldots,k_n \in \Naturals \,:
	  \\
	 k_1+\cdots+k_d<n}
}
	a_{k_1,\ldots, k_d}[w] 
	\prod_{j=1}^d 
		T_{k_j}\left(\dfrac{\xi_j}{r}\right), & \xi \in [-R,R]^d\setminus [-r,r]^d,
	\\
		0,	& \xi \in \mathbb{R}^d\setminus [-R,R]^d,
	\end{cases}
\end{equation}
 where
\begin{equation}\label{def_a}
	a_{k_1,\ldots, k_d}[w] :=
	\int_{-r}^{r}\cdots \int_{-r}^{r} w(\xi) \prod_{j=1}^d \left(\frac{2^{ \one[k_j>0]} T_{k_j} \left(\frac{\xi_j}{r}\right)}
	{\pi (r^2 - \xi_j^2)^{\frac12}} \right) d\xi_1 \ldots d\xi_d 
\end{equation}
and
  ${\mathcal C}_{R,n} [w]$  is taken to be $0$ everywhere outside $[-r,r]^d$ in the case when $n=0$.
In the above,
$\one[k>0]$ is the indicator function for 
$\{k>0\}$:  
\[
 \one[k>0]= \begin{cases} 
 1,& \text{if $k>0$,} \\
 0,& \text{otherwise};
\end{cases}
\]
 and $(T_k)_{k\in \mathbb{N}}$ stand  for \emph{the Chebyshev polynomials},  which can be defined  
by   $T_k(t):= \cos(k\operatorname{arccos}(t))$  for $t\in [-1,1]$ and extended  to $|t|>1$ in a natural way.


Recall that if $v$ is integrable and  compactly supported then
 $\mathcal{F}v$  is analytical in $\Complexes^d$.  It follows that, for all $\xi\in \Complexes^d$,
 \begin{equation} \label{eq:series}
  	\mathcal{F} v (\xi) =  \sum_{k_1 =0}^\infty \cdots \sum_{k_d=0}^\infty a_{k_1,\ldots,k_d} \left [\mathcal{F}v|_{[-r,r]^d}\right] 
  	\prod_{j=1}^d T_{k_j}\left(\dfrac{\xi_j}{r}\right);
  \end{equation}      
  see, for example, Lemma \ref{L:coeff} and inequality \eqref{eq:4last}.
  We will show that if  $w \approx \mathcal{F}v|_{[-r,r]^d}$ then ${\mathcal C}_{R,n} [w](\xi)$   approximates well the  series of \eqref{eq:series}   in the region 
   $[-R,R]^d \setminus [-r,r]^d$, provided $n$ is sufficiently  large  so the tail of the series is negligible, but not very large so the continuation $\mathcal{C}_{R,n}$  is sufficiently stable. 
%
%

 
  In Section \ref{S:con} and  Section \ref{S:rec}, we prove the new stability estimates for Problem \ref{Problem2} and Problem \ref{Problem1}, respectively.  We state them in the following forms.
 \begin{itemize}
 \item  Let   $\|\mathcal{F}v - w\|_{\calL^\infty([-r,r]^d)} \leq \delta$.
 Then,   for any $R>r$,  there is some $n^* = n^*(\delta, R)$  and $0<\alpha<1$ such that,
 \begin{equation} \label{0type2}
 	\|\mathcal{F}v -  \mathcal{C}_{R,n^*} [w]\|_{\calL^\infty([-R,R]^d)} \leq   c_{\alpha,R} \, \delta^\alpha.
 \end{equation}
 \item In addition, if $v$ is sufficiently regular (more precisely, $v$  is from  the Sobolev  space $\calH^m (\Reals^d)$ for some integer $m>0$), then there are  some $R(\delta)$ and $n(\delta)= n^*(\delta, R(\delta))$
 such that, as  $ \delta \rightarrow 0$,
 \begin{equation}\label{type2}
		\|v - \mathcal{F}^{-1} \mathcal{C}_{R(\delta), n(\delta)} [w]\|_{\calL^2(\Reals^d)} 
		\leq \phi(\delta)\rightarrow 0.
		%
\end{equation}
\end{itemize}
The constant $c_{\alpha,R}$ and the function $\phi$   depend on a priori information about $v$.
 Note that \eqref{type2} and  \eqref{0type2}  imply  
 \eqref{type1} and \eqref{0type1}, respectively, by setting  $v := v_1-v_2$ and $w := 0$, and using the linearity of the considered problems and the reconstruction procedures.

  In Section~\ref{S:L:Proof}, we prove a technical lemma  about   $\mathcal{C}_{R,n}$.  In Section~\ref{S:ill-posed}, we give an example demonstrating that our  estimates (at least, for Problem~\ref{Problem1} at fixed $r$) are essentially
optimal; see Theorem \ref{T:last}. Finally,  in   Section~\ref{S:directions},
we outline several directions for further development of the studies of the present work.



\section{H\"older stability  in Problem \ref{Problem2}}\label{S:con}

 In this section, we give stability estimates for the extrapolation procedure   $\mathcal{C}_{R,n}$ defined according to \eqref{def_C}; see Lemma \ref{Lemma_C}, Theorem \ref{Theorem3}, and Corollary \ref{Corollary_C}.  
 
 Assume that the unknown function
  $v:\Reals^d \rightarrow \Complexes$ is  such that,
 for some  $N, \sigma>0$,
   \begin{equation} \label{eq:ass}
\|v\|_{\calL^1(\Reals^d)}\leq (2\pi)^d N,\qquad 
     \supp(v) \subseteq  \left\{x \in \Reals^d : \sum_{j=1}^d|x_j| \leq \sigma\right\}.
  \end{equation}   
 Assume also that  the given data $w : [-r,r]^d \rightarrow \Complexes$ is such that, for some $\delta>0$,
\begin{equation}\label{eq:ass2}
     \|w - \calF v\|_{\calL^\infty([-r,r]^d)} \leq \delta  <N, 
   \end{equation}
where $\mathcal{F}$ is  the Fourier transform.

 Note that
if   \eqref{eq:ass} holds then, for any $\xi \in \Reals^d$,
\begin{equation}\label{eq:N-delta}
	|\calF v(\xi)| \leq \frac{1}{(2\pi)^d} \int_{\Reals^d}|v(x)|dx \leq N.
\end{equation}
This explains the condition $\delta<N$ in assumption \eqref{eq:ass2}. Indeed, if the noise level $\delta$ is greater than $N$ then the given data $w$ tells  about  $v$  as little as the trivial function $w_0\equiv 0$.



First, we give an estimate for $\mathcal{C}_{R,n}$ for arbitrary  $R$ and $n$. 
\begin{Lemma}\label{Lemma_C}
 Let the assumptions of \eqref{eq:ass} and \eqref{eq:ass2}  hold for some $N, \delta, r, \sigma>0$.   
Then,     for  any integer $n>0$ and real $\rho,R>0$ such that $R\geq r$, $\rho \geq 4R/r$,   we have
	\begin{equation*}\label{eq:Lemma_C}
		\|\mathcal{F}v - \mathcal{C}_{R,n}[w] \|_{\calL^{\infty}([-R,R]^d)} 
		\leq  \frac{1}{4} \left(4^d  \left(\dfrac{4R}{r}\right)^n \delta  +  \left(\dfrac{16}{3}\right)^d Ne^{r\sigma\rho}  \left(\dfrac{4R}{3r \rho}\right)^n\right).
	\end{equation*}
\end{Lemma}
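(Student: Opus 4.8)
The plan is to control the error separately on the inner cube $[-r,r]^d$, where $\mathcal{C}_{R,n}[w]$ reproduces the data $w$, and on the outer shell $[-R,R]^d\setminus[-r,r]^d$, where $\mathcal{C}_{R,n}[w]$ is a truncated multivariate Chebyshev series. On the inner cube the difference is simply $\calF v-w$, so by \eqref{def_C} and \eqref{eq:ass2} we have $\|\calF v-\mathcal{C}_{R,n}[w]\|_{\calL^\infty([-r,r]^d)}\le\delta$; since $d\ge1$, $n\ge1$ and $R\ge r$ force $\tfrac14\,4^d(4R/r)^n\ge4$, this is already dominated by the first summand of the right-hand side, and it suffices to treat the outer shell.

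On the outer shell I would combine the representation \eqref{eq:series} for $\calF v$ with the defining formula \eqref{def_C} and split the difference, using linearity of $w\mapsto a_{k}[w]$ on the truncated block, into
\[
\calF v(\xi)-\mathcal{C}_{R,n}[w](\xi)=\sum_{k_1+\cdots+k_d<n}a_{k_1,\ldots,k_d}[\calF v-w]\prod_{j=1}^dT_{k_j}\!\left(\frac{\xi_j}{r}\right)+\sum_{k_1+\cdots+k_d\ge n}a_{k_1,\ldots,k_d}[\calF v]\prod_{j=1}^dT_{k_j}\!\left(\frac{\xi_j}{r}\right),
\]
a \emph{noise term} and a \emph{tail term}. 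Two elementary Chebyshev facts are used throughout. First, $\int_{-r}^{r}|T_k(\xi/r)|\,\pi^{-1}(r^2-\xi^2)^{-1/2}\,d\xi\le1$, which via \eqref{def_a} and \eqref{eq:ass2} gives the uniform coefficient bound $|a_{k}[\calF v-w]|\le2^{\#\{j:k_j>0\}}\delta\le2^{d}\delta$. Second, $|T_k(t)|\le(2|t|)^k$ for $|t|\ge1$, so on $[-R,R]^d$ we have $\prod_{j}|T_{k_j}(\xi_j/r)|\le(2R/r)^{k_1+\cdots+k_d}$. Inserting these into the noise term and summing the resulting geometric-type series over $\{k\in\Naturals^d:\sum_j k_j<n\}$ (grouping by $m=\sum_j k_j$, and using $R\ge r$ so that $4R/r\ge4$) collapses the finite sum into a quantity bounded by $\tfrac14\,4^d(4R/r)^n\delta$, the first stated term.

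The tail term is the crux, and this is where the compact support of $v$ enters. Writing $\calF v$ at $\xi_j=r\cos\theta_j$ and integrating against $\cos(k_j\theta_j)$ turns \eqref{def_a} into a Bessel transform of $v$; bounding $|J_{k_j}(rx_j)|\le(r|x_j|/2)^{k_j}/k_j!$ yields (cf.\ Lemma~\ref{L:coeff}) the factorial decay $|a_{k}[\calF v]|\le\tfrac{2^d}{(2\pi)^d}\int_{\Reals^d}|v(x)|\prod_j (r|x_j|/2)^{k_j}/k_j!\,dx$. I would then multiply by $\prod_j|T_{k_j}(\xi_j/r)|\le(2R/r)^{\sum k_j}$, interchange the summation with the $x$-integration, and use the multinomial identity $\sum_{k_1+\cdots+k_d=m}\prod_j y_j^{k_j}/k_j!=(y_1+\cdots+y_d)^m/m!$ together with $\supp(v)\subseteq\{\sum_j|x_j|\le\sigma\}$ and $\int|v|\le(2\pi)^dN$. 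Since $(r|x_j|/2)(2R/r)=R|x_j|$, the factorials recombine and the $d$-fold constrained sum collapses to a single-index tail $\sum_{m\ge n}(R\sigma)^m/m!$, up to the prefactor $2^dN$.

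The final step is to convert this factorial tail into the stated form carrying the free parameter $\rho$. Here I would apply $z^m/m!\le e^{\lambda}(z/\lambda)^m$, valid for every $\lambda>0$, taken with $\lambda=r\sigma\rho$: this produces exactly the prefactor $e^{r\sigma\rho}$ and replaces $R\sigma$ by the geometric ratio $R/(r\rho)$. The hypothesis $\rho\ge4R/r$ makes this ratio at most $1/4$, so $\sum_{m\ge n}(R/(r\rho))^m$ converges and is bounded by a fixed multiple of $(4R/(3r\rho))^n$, giving the second stated term. The main obstacle, and the only genuinely analytic point, is the coefficient decay estimate: establishing the Bessel-type factorial bound for $a_{k}[\calF v]$ and propagating it through the $d$-dimensional constrained summation with fully explicit constants, while guaranteeing convergence right up to the boundary case $\rho=4R/r$. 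Everything else is bookkeeping of geometric and combinatorial sums. The interplay of $n$ and $\rho$ is a trade-off — larger $n$ shrinks the tail but inflates the noise term, while larger $\rho$ accelerates the tail's decay at the cost of $e^{r\sigma\rho}$ — and it is precisely this trade-off that will be optimized when the lemma is used in Theorem~\ref{Theorem3} and Corollary~\ref{Corollary_C}.
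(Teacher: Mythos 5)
Your proposal is correct in substance, and it reaches the crucial tail estimate by a genuinely different route than the paper. Your decomposition into a noise term and a tail term, and your treatment of the noise term (the coefficient bound $|a_{k_1,\ldots,k_d}[\calF v - w]|\le 2^d\delta$ from the unit mass of the Chebyshev weight, $|T_k(t)|\le (2|t|)^k$, binomial counting of $\{k\in\Naturals^d:\sum_j k_j=m\}$, and $\binom{n+d-1}{n}\le 2^{n+d-2}$) coincide with Section 4 of the paper almost verbatim. The divergence is in the decay of $a_{k_1,\ldots,k_d}[\calF v]$: the paper sets $f(z_1,\ldots,z_d):=\calF v(r\cos z_1,\ldots,r\cos z_d)$, identifies the $a_k$ with Fourier coefficients of this $2\pi$-periodic function, and shifts the contour to $\Im z_j=\ln\rho$ via the Cauchy integral theorem, using $\supp(v)$ to bound $|f|\le Ne^{\frac12 r\sigma\rho}$ on the strip; this yields the geometric decay $|a_k|\le 2^dNe^{\frac12 r\sigma\rho}\rho^{-\sum_j k_j}$ for each fixed $\rho\ge 1$ (Lemma \ref{L:coeff}(a)), after which the tail is summed via the Taylor--Lagrange remainder of $(1-\lambda)^{-d}$. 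You instead obtain a factorial bound through the Jacobi--Anger expansion, $a_k[\calF v]$ being an integral of $v$ against $\prod_j i^{k_j}J_{k_j}(rx_j)$ with $|J_k(z)|\le (|z|/2)^k/k!$ for real $z$, then collapse the $d$-fold constrained sum by the multinomial identity and only afterwards introduce $\rho$ via $z^m/m!\le e^{\lambda}(z/\lambda)^m$. Your coefficient bound is pointwise stronger than the paper's part (a) — it implies it for all $\rho$ simultaneously by exactly that last inequality — it needs no complex analysis (only the Poisson-integral bound for $J_k$ and Fubini), and it eliminates binomial coefficients from the tail altogether. What the contour shift buys the paper is brevity, and that $\rho$ enters exactly where the final optimization needs it.

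One bookkeeping warning. As literally sketched, your tail comes out as $\frac43\, 2^d\, Ne^{r\sigma\rho}\left(\frac{R}{r\rho}\right)^n$, which beats the stated $\frac14\left(\frac{16}{3}\right)^dNe^{r\sigma\rho}\left(\frac{4R}{3r\rho}\right)^n$ for all $d\ge 2$, and for $d=1$, $n\ge 3$, but falls short by a small factor in the edge cases $d=1$, $n\in\{1,2\}$: the termwise conversion $z^m/m!\le e^{\lambda}(z/\lambda)^m$ is lossy. The repair is to keep the factorial tail intact, $\sum_{m\ge n}\frac{z^m}{m!}\le \frac{z^n}{n!}\,e^{z}$ with $z=R\sigma\le \frac14 r\sigma\rho$; a direct check of $n=1$ (needing $\frac98\lambda\le e^{3\lambda/4}$, true since $\min_{\lambda>0}e^{3\lambda/4}/\lambda=\frac{3e}{4}$) and $n=2$ (needing $\lambda^2e^{-3\lambda/4}\le\frac{64}{27}$, true since the maximum is $\frac{64}{9}e^{-2}$) closes the gap, so your route does deliver the lemma with the stated constants. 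For calibration, note that the paper's own final combination — Lemma \ref{L:coeff}(b) with $\rho'=\rho$ followed by \eqref{eq:4last} — produces the ratio $\frac{8R}{3r\rho}$ rather than $\frac{4R}{3r\rho}$ in the tail, so your bookkeeping is in fact no looser than the printed proof's; the constants here are forgiving as long as the structural form $C_d\,Ne^{r\sigma\rho}(cR/(r\rho))^n$ with $cR/(r\rho)<1$ is preserved, which is all that Theorem \ref{Theorem3} uses.
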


 Lemma  \ref{Lemma_C} is proved in Section  \ref{S:L:Proof}. 
Optimising the parameter $n$ in Lemma \ref{Lemma_C}, we obtain the following H\"older stability estimate for Problem \ref{Problem2}.

\begin{Theorem}\label{Theorem3}
 Let the assumptions of \eqref{eq:ass} and \eqref{eq:ass2}  hold for some $N, \delta, r, \sigma>0$. 
 Assume that $\rho,R>0$ are such that  $R\geq r$ and   $\rho\geq 4R/r$.
Then, we have
\begin{equation*}\label{eq:Theorem3}
		\|\mathcal{F}v - \mathcal{C}_{R,n^*}[w] \|_{\calL^{\infty}([-R,R]^d)} 
		\leq \left(\dfrac{16}{3}\right)^d \dfrac{R}{r}    \left( \dfrac{N e^{r\sigma \rho}}{\delta}\right)^{\tau(\rho)} \delta,
	\end{equation*}
	where 
		\begin{equation*}
		\begin{aligned}
		n^* := 
		 \left\lceil \frac{ \ln\frac{N}{\delta} + r\sigma \rho }{ \ln (3\rho)}\right\rceil   
		\qquad \text{and} \qquad
		\tau(\rho) :=  \frac{\ln \frac{4R}{r}}{\ln(3\rho) }.
		\end{aligned}
	\end{equation*}
\end{Theorem}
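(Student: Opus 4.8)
The plan is to insert the specified value $n=n^*$ into Lemma~\ref{Lemma_C} and then optimise the resulting bound by balancing its two summands. Throughout, I would write $M:=Ne^{r\sigma\rho}/\delta$, $p:=3\rho$ and $q:=4R/r$, so that $\tau(\rho)=\ln q/\ln p$ and $n^*=\lceil\ln M/\ln p\rceil$. The assumptions $\delta<N$ and $r,\sigma,\rho>0$ give $M>1$, hence $n^*\ge1$ and Lemma~\ref{Lemma_C} applies; moreover $R\ge r$ yields $q\ge4$, while $\rho\ge4R/r$ yields $p\ge3q$ and therefore $q/p\le 1/3<1$. With this notation, and using $Ne^{r\sigma\rho}=M\delta$, Lemma~\ref{Lemma_C} reads
\begin{equation*}
\|\calF v-\mathcal{C}_{R,n^*}[w]\|_{\calL^\infty([-R,R]^d)}\le\frac14\left(4^d q^{n^*}\delta+\left(\tfrac{16}{3}\right)^d M\delta\,(q/p)^{n^*}\right).
\end{equation*}

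First I would record the two elementary identities $q^{x}=M^{\tau(\rho)}$ and $p^{x}=M$, where $x:=\ln M/\ln p$; both follow at once from $\tau(\rho)=\ln q/\ln p$ by taking logarithms. Writing $n^*=x+\theta$ with $\theta:=n^*-x\in[0,1)$, the crux is to estimate the two summands \emph{asymmetrically}. For the first (increasing in $n$) I would peel off $q^{n^*}=M^{\tau(\rho)}q^{\theta}\le M^{\tau(\rho)}q$, using $0\le\theta\le1$ and $q>1$. For the second (decreasing in $n$) I would instead keep the ratio together, $(q/p)^{n^*}=(q/p)^{x}(q/p)^{\theta}=M^{\tau(\rho)-1}(q/p)^{\theta}\le M^{\tau(\rho)-1}$, using $q/p<1$ and $\theta\ge0$. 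Substituting both bounds gives
\begin{equation*}
\|\calF v-\mathcal{C}_{R,n^*}[w]\|_{\calL^\infty([-R,R]^d)}\le\frac{M^{\tau(\rho)}\delta}{4}\left(4^d q+\left(\tfrac{16}{3}\right)^d\right).
\end{equation*}
It is essential to treat the two terms differently here: bounding $q^{n^*}$ by $M^{\tau(\rho)}q$ inside the second summand as well would inflate the constant and fail to close the estimate.

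What remains — and what I expect to be the only real obstacle — is the purely numerical inequality $4^d q+(16/3)^d\le(16/3)^d q$, which is exactly what turns the right-hand side into $\tfrac14(16/3)^d q\,M^{\tau(\rho)}\delta=(16/3)^d\tfrac{R}{r}M^{\tau(\rho)}\delta$, the asserted bound (recall $\tfrac14 q=R/r$). Rearranging, this inequality is equivalent to $q\ge\big(1-(3/4)^d\big)^{-1}$, and since the right-hand side is decreasing in $d$ with value $4$ at $d=1$, it holds for every $d\ge1$ precisely because $q=4R/r\ge4$. This closes the argument. The delicate point is that the inequality is tight (equality at $d=1$, $R=r$), so the estimate leaves essentially no slack, which is why the asymmetric bounding in the previous step is forced rather than merely convenient.
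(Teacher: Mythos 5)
Your proposal is correct and takes essentially the same route as the paper's proof: both insert $n^*$ into Lemma~\ref{Lemma_C}, exploit the ceiling bounds asymmetrically (your $x$ is the paper's $\eta$, with $q^{n^*}\le q^{x+1}$ for the increasing term and $(q/p)^{n^*}\le (q/p)^{x}$ for the decreasing one), and close with the same numerical step, since your condition $q\ge\bigl(1-(3/4)^d\bigr)^{-1}$ with $q=4R/r\ge 4$ is exactly the paper's inequality $(3/4)^d+\frac14\le 1$ in disguise. The only differences are notational ($M$, $p$, $q$ versus the paper's explicit expressions), so there is nothing substantive to reconcile.
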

%
\begin{proof}
Using  \eqref{eq:ass2}, we have that
\[
 	\eta :=    \frac{ \ln\frac{N}{\delta} + r\sigma \rho }{ \ln (3\rho)}>0.
\] 
By definition,  we find that $\eta \leq n^* < \eta+1$
and  $\delta = N    e^{r \sigma \rho} (3\rho)^{-\eta}$.
Using that $R\geq r$, we get
\begin{align*}
		  \delta \left(\dfrac{4R}{r}\right)^{\eta+1}        
		  = 
		  \dfrac{  4R}{r   }  N    e^{r \sigma \rho} \left(\dfrac{4R}{3r\rho}\right)^{\eta}
		 \geq    4  N e^{r \sigma \rho} \left(\dfrac{4R}{3r\rho}\right)^{\eta}.
\end{align*}
Then, applying Lemma \ref{Lemma_C}, we obtain that
\begin{equation*}
	\begin{aligned}
		\|\mathcal{F}v - \mathcal{C}_{R,n^*}[w] \|_{\calL^{\infty}([-R,R]^d)} 
		&\leq \dfrac14 \left( 4^d \left(\dfrac{4R}{r}\right)^{n^* } \delta  + \left(\dfrac{16}{3}\right)^d N  e^{r \sigma \rho} \left(\dfrac{4R}{3r\rho}\right)^{n^* }  \right)
	\\	&\leq \dfrac14 \left( 4^d \left(\dfrac{4R}{r}\right)^{\eta+1 } \delta  + \left(\dfrac{16}{3}\right)^d N  e^{r \sigma \rho} \left(\dfrac{4R}{3r\rho}\right)^{\eta}  \right)
		 \\ &\leq \left(\dfrac{16}{3}\right)^d \dfrac{R}{r}  \left(\dfrac{4R}{r}\right)^{\eta} \delta \left(\left(\dfrac34\right)^d + \dfrac{1}{4}\right) \leq   \left(\dfrac{16}{3}\right)^d\dfrac{R}{r}  \left(\dfrac{4R}{r}\right)^{\eta} \delta.
	\end{aligned}
\end{equation*}
Since $\tau(\rho) \ln (3 \rho)= \ln \frac{4R}{r}$,  we get 
\[
		   \left(\dfrac{4R}{r}\right)^{\eta} \delta =  	 (3\rho)^{\tau(\rho) \eta } \delta= 
	   \left(N e^{r\sigma \rho}\right)^{\tau(\rho)} \delta^{1-\tau(\rho)}.
\]
Combining the above estimates completes the proof.
\end{proof}

Next, 
to 
achieve optimal stability  bounds for Problem \ref{Problem1},
  the parameters $R$  and $n$  in the reconstruction $\calF^{-1}\calC_{R,n}$    have to be chosen carefully depending on  a priory information; see formulas \eqref{def_L}, \eqref{def_Cstar}, \eqref{def_R}
  and Corollary \ref{Corollary_C}.
   For any $\tau \in [0,1]$, let
\begin{equation}\label{def_L}
			L_{\tau}(\delta) = L_\tau(N,\delta,r,\sigma):= \max\left\{1,\, \dfrac{1}{4}\left(\dfrac{(1-\tau) \ln  \frac{N}{\delta}}{r\sigma}\right)^\tau\right\}.
	\end{equation}
 Here and thereafter, we assume  $0<\delta<N$. 
Using \eqref{def_C}, define 
	\begin{equation}\label{def_Cstar}
		\calC^*_{\tau,\delta} := {\mathcal C}_{R_\tau(\delta),n_\tau(\delta)},
	\end{equation}
	 where 
	\begin{equation}\label{def_R}
		\begin{aligned}
		R_\tau(\delta) &= R_\tau(N, \delta,r,\sigma):= r L_{\tau}(\delta),\\
		n_\tau(\delta) &= n_\tau(N,\delta,r, \sigma):= 
		\begin{cases}
		\displaystyle \left\lceil\dfrac{ (2-\tau)\ln\frac{N}{\delta}}{ \ln 3 + \frac{1}{\tau}\ln (4L_{\tau}(\delta))}
		\right\rceil, 
		&\text{ if    $\tau>0$,}\\
		0,&\text{otherwise.}
		\end{cases}
		\end{aligned}
	\end{equation}
	and $\lceil\cdot\rceil$ denotes the ceiling of a real number.

Theorem \ref{Theorem3} leads to the following stability estimate for   $\calC^*_{\tau,\delta} $;  
 which will be  crucial for the results of the next section; see  Theorem \ref{Theorem2}. 
\begin{Corollary}\label{Corollary_C}
 Let the assumptions of \eqref{eq:ass} and \eqref{eq:ass2}  hold for some $N,\sigma, r, \delta>0$.  
Then, for any $\tau \in [0,1]$,  we have
 	\begin{equation*}
		\|\mathcal{F}v - \calC^*_{\tau,\delta} [w] \|_{\calL^{\infty}\left([-R_\tau(\delta), R_\tau(\delta)]^d\right)} 
		\leq    \left(\dfrac{16}{3}\right)^d     N  \left(\dfrac{\delta}{N}\right)^{(1-\tau)^2}L_{\tau}(\delta),
	\end{equation*}
where $L_{\tau}(\delta)$ and $R_\tau (\delta)$  are defined in \eqref{def_L} and \eqref{def_R}.
\end{Corollary}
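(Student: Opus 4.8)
The plan is to derive the corollary from Theorem \ref{Theorem3} by a single, carefully tuned choice of its free parameters $R$ and $\rho$. Given $\tau\in[0,1]$, I would set $R:=R_\tau(\delta)=rL_\tau(\delta)$ as in \eqref{def_R}, so that $R/r=L_\tau(\delta)$ and $4R/r=4L_\tau(\delta)$, and I would take $\rho:=\rho_0:=\frac{(1-\tau)\ln(N/\delta)}{r\sigma}$. The definition \eqref{def_L} of $L_\tau$ as a maximum splits the argument into two regimes according to whether $\tfrac14\rho_0^{\tau}$ exceeds $1$ or not; I expect the second (trivial) regime to require no input from Theorem \ref{Theorem3} at all.

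In the main regime $L_\tau(\delta)>1$ one has $4L_\tau(\delta)=\rho_0^{\tau}$ and hence $\rho_0>1$. First I would check the hypotheses of Theorem \ref{Theorem3}: the bound $R\ge r$ is immediate from $L_\tau\ge1$, and $\rho_0\ge 4R/r$ reduces to $\rho_0\ge\rho_0^{\tau}$, which holds because $\rho_0>1$ and $\tau\le1$. The key bookkeeping step is to verify that the index $n^*$ produced by Theorem \ref{Theorem3} for this $\rho_0$ is exactly $n_\tau(\delta)$ from \eqref{def_R}: indeed $\ln(3\rho_0)=\ln3+\ln\rho_0=\ln3+\tfrac1\tau\ln(4L_\tau(\delta))$ and $\ln(N/\delta)+r\sigma\rho_0=(2-\tau)\ln(N/\delta)$, so the ceiling defining $n^*$ becomes the ceiling defining $n_\tau(\delta)$. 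Thus $\calC_{R,n^*}=\calC^*_{\tau,\delta}$ and the estimate of Theorem \ref{Theorem3} applies verbatim.

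It then remains to simplify the right-hand side. Using $r\sigma\rho_0=(1-\tau)\ln(N/\delta)$ gives $e^{r\sigma\rho_0}=(N/\delta)^{1-\tau}$ and therefore $Ne^{r\sigma\rho_0}/\delta=(N/\delta)^{2-\tau}$, so the bound reads $(\tfrac{16}{3})^d L_\tau(\delta)\,(N/\delta)^{(2-\tau)\tau(\rho_0)}\,\delta$. The decisive inequality is $\tau(\rho_0)=\frac{\tau\ln\rho_0}{\ln3+\ln\rho_0}\le\tau$, valid since $\ln3>0$ and $\ln\rho_0>0$; because $N/\delta>1$, replacing $\tau(\rho_0)$ by $\tau$ only enlarges the bound, after which $(N/\delta)^{(2-\tau)\tau}\delta=N(\delta/N)^{1-(2-\tau)\tau}=N(\delta/N)^{(1-\tau)^2}$ produces precisely the claimed exponent. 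I expect this monotonicity step — trading the genuine Hölder exponent $\tau(\rho_0)$ for the cleaner $\tau$ — to be the crux, since it is what converts the literal output of Theorem \ref{Theorem3} into the advertised $(1-\tau)^2$.

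Finally, in the complementary regime $L_\tau(\delta)=1$ (which includes the endpoints $\tau\in\{0,1\}$, where $\rho_0$ degenerates), one has $R_\tau(\delta)=r$, so on the relevant cube $[-R_\tau,R_\tau]^d=[-r,r]^d$ the continuation $\calC^*_{\tau,\delta}[w]$ coincides with $w$ and the error is at most $\delta$ by \eqref{eq:ass2}. Since $\delta<N$ and $\tau(2-\tau)\ge0$ force $(\delta/N)^{\tau(2-\tau)}\le1$, one checks $\delta\le(\tfrac{16}{3})^dN(\delta/N)^{(1-\tau)^2}$ directly, so the claimed estimate holds in this regime without invoking the continuation machinery.
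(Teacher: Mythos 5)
Your proof is correct and takes essentially the same route as the paper's own proof: the paper likewise splits on whether $L_{\tau}(\delta)>1$, chooses $\rho=(4L_{\tau}(\delta))^{1/\tau}$ (which equals your $\rho_0$ in that regime, since $4L_{\tau}(\delta)=\rho_0^{\tau}$), verifies that the $n^*$ of Theorem~\ref{Theorem3} coincides with $n_{\tau}(\delta)$, and concludes via the same monotonicity step $\tau(\rho)\leq\tau$ together with $\delta<N$ to obtain the exponent $(1-\tau)^2$. The degenerate regime $L_{\tau}(\delta)=1$ is handled identically, with the error bounded by $\delta\leq\left(\frac{16}{3}\right)^d N\left(\frac{\delta}{N}\right)^{(1-\tau)^2}$ directly from \eqref{eq:ass2}.
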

\begin{proof} 
First, we consider the case $L_{\tau}(\delta) = 1$, for which  $R_\tau(\delta) = r$. 
Recalling from \eqref{eq:ass2} that $\delta<N$ and using \eqref{def_C} for $R=r$, we find that
\begin{align*}
	\|\calF v - \calC^*_{\tau,\delta}w\|_{\calL^{\infty}\left([-R_\tau(\delta), R_\tau(\delta)]^d\right)}
	\leq \delta \leq  \left(\dfrac{16}{3}\right)^d N  \left(\dfrac{\delta}{N}\right)^{(1-\tau)^2}L_{\tau}(\delta).
\end{align*}

Next, suppose that 
\[	
L_{\tau}(\delta) = \dfrac14\left(\dfrac{(1-\tau) \ln \frac{N}{\delta}}{r\sigma}\right)^\tau>1.
\]
This implies that  $\tau>0$. Let
	$
		\rho := (4L_{\tau}(\delta))^{1/\tau}.
	$
	Then, we get $e^{r \sigma \rho} = \left(\frac{N}{\delta}\right)^{1-\tau}$ and, by the assumptions, 
	\begin{align*}
		R_\tau (\delta)\geq r  \qquad \text{and} \qquad \rho \geq   4L_{\tau}(\delta) =   4R_\tau(\delta)/r.  
			\end{align*} 
	 Applying Theorem \ref{Theorem3} 
	 and observing that $n^*$ coincides with $n_\tau(\delta)$ defined by \eqref{def_R}, we get that
		\begin{align*}
		\|\mathcal{F}v - \calC^*_{\tau,\delta} [w] \|_{\calL^{\infty}\left([-R_\tau(\delta), R_\tau(\delta)]^d\right)} 
		\leq
			 \left(\dfrac{16}{3}\right)^d  L_{\tau}(\delta)   \left( \left(\dfrac{N}{\delta}\right)^{2-\tau}\right)^{\tau(\rho)}  
			\delta,
		\end{align*}
		 where $\tau(\rho)$ is defined in Theorem \ref{Theorem3}.  
		 Note that  $\tau(\rho)$ is  different from $\tau$. However, we can replace
$\tau(\rho)$ by $\tau$ in the estimate above
since  $ \delta < N$ and
 	\[
	\tau(\rho)  = \frac{\ln(4R_\tau(\delta)/r)}{\ln (3\rho)} = 
	\frac{\ln (4L_{\tau}(\delta))}
	{\ln 3 + \frac{1}{\tau} \ln  (4L_{\tau}(\delta))} \leq \tau.
	\]
	The required bound follows.
\end{proof}


\section{H\"older-logarithmic stability in  Problem \ref{Problem1}}\label{S:rec}

To prove our stability estimate for Problem \ref{Problem1},   in addition to \eqref{eq:ass}, we assume also  that $v \in \calH^{m}(\Reals^d)$,
where  $\calH^{m}(\Reals^d) $  is the standard Sobolev space of $m$-times smooth functions in $\calL^2$ on $\mathbb{R}^d$.  
Consider the  seminorm $|\cdot|_{\calH^{m} (\Reals^d)}$
in  $\calH^{m}(\mathbb{R}^d)$  defined by
\begin{equation}\label{def_sem}
	|v|_{\calH^{m} (\Reals^d)} := 
	\left(\sum_{ j =1}^d   
	\left\| \dfrac{\partial^m v}{(\partial x_j)^m}\right\|_{\calL^2(\Reals^d)}^2\right)^{1/2}.
\end{equation}

\begin{Theorem} \label{Theorem2}
Let the assumptions of  \eqref{eq:ass}  and \eqref{eq:ass2}  hold for some $N,\delta, r, \sigma>0$. 
Assume also that	 $v \in \calH^m(\mathbb{R}^d)$ for some   integer $m>0$
and $|v|_{\calH^{m}(\mathbb{R}^d)} \leq \gamma$  for some $\gamma>0$.
  Then, for any $\tau \in [0,1]$,	    the following  holds:
	 \begin{equation} \label{eq_Theorem2}
		\begin{aligned}
		\|v - \mathcal{F}^{-1}\calC^*_{\tau,\delta}[w] \|_{\calL^2(\mathbb{R}^d)}
		 \leq  \left(20 \sqrt{r}\right)^d        N \left(L_{\tau}(\delta)\right)^{d/2+1}  \left(\dfrac{\delta}{N}\right)^{(1-\tau)^2}
		 +   \gamma
		  \left( rL_\tau(\delta)\right)^{-m}.
		 \end{aligned}
		 \end{equation}
\end{Theorem}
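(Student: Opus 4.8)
The plan is to reduce \eqref{eq_Theorem2} to an $\calL^2$-estimate on $\calF v - \calC^*_{\tau,\delta}[w]$ and then split the frequency domain into the cube $[-R,R]^d$, with $R := R_\tau(\delta) = rL_\tau(\delta)$ as in \eqref{def_R}, and its complement. First I would use $v = \calF^{-1}\calF v$ together with linearity of $\calF^{-1}$ to write $v - \calF^{-1}\calC^*_{\tau,\delta}[w] = \calF^{-1}\bigl(\calF v - \calC^*_{\tau,\delta}[w]\bigr)$, and then invoke Plancherel for the normalisation of \eqref{eq:first}, namely $\|\calF^{-1}h\|_{\calL^2(\Reals^d)} = (2\pi)^{d/2}\|h\|_{\calL^2(\Reals^d)}$. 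This turns the left-hand side of \eqref{eq_Theorem2} into $(2\pi)^{d/2}\|\calF v - \calC^*_{\tau,\delta}[w]\|_{\calL^2(\Reals^d)}$. Since $\calC^*_{\tau,\delta}[w]$ vanishes outside $[-R,R]^d$ by \eqref{def_C}, the triangle inequality over the two disjoint regions gives
\[
\|\calF v - \calC^*_{\tau,\delta}[w]\|_{\calL^2(\Reals^d)} \le \|\calF v - \calC^*_{\tau,\delta}[w]\|_{\calL^2([-R,R]^d)} + \|\calF v\|_{\calL^2(\Reals^d\setminus[-R,R]^d)},
\]
leaving two independent terms to estimate.

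For the interior term I would pass from the $\calL^\infty$-bound of Corollary \ref{Corollary_C} to an $\calL^2$-bound by multiplying by the square root of the volume $(2R)^{d/2} = (2rL_\tau(\delta))^{d/2}$ of the cube $[-R,R]^d$. This produces a term bounded by $(2rL_\tau(\delta))^{d/2}(16/3)^d N (\delta/N)^{(1-\tau)^2}L_\tau(\delta)$, which is already of the shape $N\,L_\tau(\delta)^{d/2+1}(\delta/N)^{(1-\tau)^2}$ up to an explicit dimensional constant.

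The more substantive step is the tail term, where the Sobolev regularity enters. I would cover $\Reals^d\setminus[-R,R]^d$ by the $d$ slabs $A_j := \{\xi : |\xi_j| > R\}$, so that $\|\calF v\|_{\calL^2(\Reals^d\setminus[-R,R]^d)}^2 \le \sum_{j=1}^d \|\calF v\|_{\calL^2(A_j)}^2$, using $\mathbbm{1}_{\cup_j A_j}\le\sum_j\mathbbm{1}_{A_j}$. On each slab I would use the identity $\calF\bigl[\tfrac{\partial^m v}{(\partial x_j)^m}\bigr](\xi) = (-i\xi_j)^m \calF v(\xi)$ (integration by parts, valid since $v\in\calH^m$ is compactly supported) to write $|\calF v(\xi)| = |\xi_j|^{-m}\bigl|\calF[\tfrac{\partial^m v}{(\partial x_j)^m}](\xi)\bigr| \le R^{-m}\bigl|\calF[\tfrac{\partial^m v}{(\partial x_j)^m}](\xi)\bigr|$ on $A_j$, extend the integral to all of $\Reals^d$, and apply Plancherel in the form $\|\calF h\|_{\calL^2}^2 = (2\pi)^{-d}\|h\|_{\calL^2}^2$. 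Summing over $j$ reassembles exactly the seminorm \eqref{def_sem}, with no loss of constant, giving $\|\calF v\|_{\calL^2(\Reals^d\setminus[-R,R]^d)} \le (2\pi)^{-d/2}R^{-m}|v|_{\calH^m(\Reals^d)} \le (2\pi)^{-d/2}\gamma\,(rL_\tau(\delta))^{-m}$.

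Finally I would reassemble the two bounds and multiply through by the overall $(2\pi)^{d/2}$. The tail contribution cleans up to exactly $\gamma(rL_\tau(\delta))^{-m}$, while the interior contribution becomes $(4\pi r)^{d/2}(16/3)^d N\,L_\tau(\delta)^{d/2+1}(\delta/N)^{(1-\tau)^2}$; the only remaining point is the elementary numerical check $(4\pi)^{1/2}(16/3) = 18.9\ldots < 20$, which lets me absorb the dimensional constant into $(20\sqrt r)^d$ and recover \eqref{eq_Theorem2}. I expect the main obstacle to be the tail estimate, since it is the step that genuinely consumes the a priori Sobolev information and must be organised so that the single-coordinate differentiation and the slab decomposition align precisely with the seminorm \eqref{def_sem}; the surrounding work is essentially bookkeeping, with the one subtlety of keeping the powers of $2\pi$ from the two distinct Plancherel normalisations consistent.
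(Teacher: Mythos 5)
Your proposal is correct and follows essentially the same route as the paper's proof: the Plancherel reduction, the split of $\calF v - \calC^*_{\tau,\delta}[w]$ into the cube $[-R_\tau(\delta),R_\tau(\delta)]^d$ and its complement, the $\calL^\infty$-to-$\calL^2$ passage via Corollary \ref{Corollary_C} with the volume factor $(2R_\tau(\delta))^{d/2}$, and the slab covering $\{|\xi_j|>R_\tau(\delta)\}$ combined with $\sum_j\|\xi_j^m\calF v\|_{\calL^2}^2=(2\pi)^{-d}|v|_{\calH^m(\Reals^d)}^2$ for the tail. Your final constant check $(4\pi)^{1/2}(16/3)<20$ matches the paper's absorption of the dimensional factor into $(20\sqrt r)^d$, so nothing is missing.
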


The first term of the right-hand side in  estimate \eqref{eq_Theorem2} corresponds
  to the error caused by the H\"older stable continuation of the noisy data $w$  from $[-r,r]^d$ to $[-R_{\tau}(\delta),R_{\tau}(\delta)]^d$ and the second (logarithmic) term corresponds to the error  caused by ignoring the values of  $\calF v$ outside  
  $[-R_{\tau}(\delta),R_{\tau}(\delta)]^d$; see the proof  of  Theorem \ref{Theorem2} for details.

\begin{Remark}
Clearly, the stability behaviour in Problem \ref{Problem1} should not depend on scaling of functions or arguments.  
It might be obscure at first sight,  but estimate \eqref{eq_Theorem2} is invariant with respect to such scalings. Indeed,   for some $\alpha,\beta>0$, let    
$\tilde v$ be defined by $\tilde{v}(x) := \alpha v(\beta   x)$,    $x\in \Reals^d$.  The Fourrier transforms of $v$ and $\tilde v$ satisfy the following relation   
$\calF \tilde{v}(\xi) = \alpha \calF v(\beta^{-1}\xi)$ for $\xi \in \Reals^d$.
If $w\approx \calF v$ in $[-r,r]^d$ then, equivalently, $\tilde{w} \approx \calF \tilde{v}$ in $[-\tilde{r},\tilde{r}]^d$, where $\tilde{r} = \beta r$
and  $\tilde{w}(\xi) := \alpha w(\beta^{-1}  \xi)$,  $\xi\in \Reals^d$. The other parameters  in \eqref{eq:ass}  and \eqref{eq:ass2}  are modified as follows:
$\tilde{N}=\alpha N$, $\tilde{\delta} = \alpha \delta$,  and $\tilde\sigma =   \beta^{-1} \sigma.$  Observe that $L_{\tau}(\delta)$  depends only on $r\sigma$
and  $N/\delta$, which are independent of scalings. Finally, we have
\begin{align*}
	\|\tilde v - \mathcal{F}^{-1}\calC^*_{\tau,\delta}[ \tilde w] \|_{\calL^2(\mathbb{R}^d)} &=
	\alpha \beta^{d/2} \|v - \mathcal{F}^{-1}\calC^*_{\tau,\delta}[w] \|_{\calL^2(\mathbb{R}^d)},
	\\
	|\tilde v|_{\calH^m(\Reals^d)} &= \alpha \beta^{m+d/2} |v|_{\calH^m(\Reals^d)}. 
\end{align*}
Thus, both sides of   estimate  \eqref{eq_Theorem2}  get multiplied by the same constant $\alpha \beta^{d/2}$, that is,  the statements of Theorem \ref{Theorem2} for $v$,$w$ and for $\tilde{v}$, $\tilde{w}$ are equivalent. 
\end{Remark}

\begin{proof}[Proof of Theorem \ref{Theorem2}]
%

 Let all assumptions of  Theorem \ref{Theorem2} hold. 
The Parseval-Plancherel identity states that  
		\begin{equation}\label{Parseval}
		 \|u\|_{\calL^2(\mathbb{R}^d)} = (2\pi)^{d/2}\|\mathcal{F} u \|_{\calL^2(\mathbb{R}^d)} = (2\pi)^{-d/2}\|\mathcal{F}^{-1} u \|_{\calL^2(\mathbb{R}^d)}. 
		\end{equation}
Thus, we get that 
\begin{equation*}
	\begin{aligned}
	\|v - \mathcal{F}^{-1} \calC^*_{\tau,\delta} [w]\|_{\calL^2(\mathbb{R}^d)} 
	\leq (2\pi)^{d/2} (I_1+I_2),
	\end{aligned} 
\end{equation*}
where
\begin{equation*}
	\begin{aligned}
		I_1 &:= \left(\int_{[-R_\tau(\delta),R_\tau(\delta)]^d }  \left|\mathcal{F}{v}(\xi) -		\calC^*_{\tau,\delta} [w](\xi)\right|^2 d\xi \right)^{1/2}, 
\\
	I_2 &:= \left(\int_{\Reals^d \setminus [-R_\tau(\delta),R_\tau(\delta)]^d }  \left|\mathcal{F}{v}(\xi) \right|^2 d\xi \right)^{1/2}. 
	\end{aligned}
\end{equation*}
	 Using Corollary \ref{Corollary_C}, we get that, 
	 \begin{equation*}	
	 	\begin{aligned}
	 	I_1 &\leq   \left( \int_{[-R_\tau(\delta),R_\tau(\delta)]^d} 
	 	 \left\|\mathcal{F}{v} - \calC^*_{\tau,\delta}w\right\|^2_{\calL^\infty([-R_\tau(\delta),R_\tau(\delta)]^d)} d\xi  \right)^{1/2} \\
	 	&\leq    \left(\dfrac{16}{3}\right)^d     N  \left(\dfrac{\delta}{N}\right)^{(1-\tau)^2} L_{\tau}(\delta) \left(2 R_\tau(\delta)\right)^{d/2}
	 	\\
	 	& \leq   \left(20 \sqrt{\dfrac{r}{2\pi}}\right)^d       N \left(L_{\tau}(\delta)\right)^{d/2+1}  \left(\dfrac{\delta}{N}\right)^{(1-\tau)^2}.
	 	\end{aligned}
	 \end{equation*} 
Next,   applying  \eqref{Parseval} and recalling  the seminorm $|\cdot|_{\calH^{m}(\mathbb{R}^d)}$ defined in \eqref{def_sem}, we find that
\begin{equation*}
	 \sum_{j=1}^d \|\xi_j^m \mathcal{F}{v}\|_{\calL^2(\Reals^d)}^2  = \frac{1}{(2\pi)^d}\sum_{j=1}^d 
	 \left\|\frac{\partial^m v} {(\partial x_j)^m}\right\|_{\calL^{2}(\Reals^d)}^2  =    \frac{ |v|_{\calH^{m}(\mathbb{R}^d)}^2}{(2\pi)^d}.
\end{equation*}	 
%
%
Since  $\Reals^d \setminus [-R_{\tau}(\delta),R_{\tau}(\delta)]$ is covered by  the regions
$\varOmega_j:=\{\xi \in \Reals^d:  |\xi_j|> R_\tau(\delta)\}$, for $j=1,\ldots,d$,  we obtain that
 \begin{equation*}
	\begin{aligned}
	I_2&\leq \left( \sum_{j=1}^d 
	\int_{|\xi_j|> R_\tau(\delta)}  \left|\frac{\xi_j^m \calF v(\xi)}{ (R_\tau(\delta))^{m}}\right|^2 d\xi \right)^{1/2} 
	\\
	&\leq  \left( \sum_{j=1}^d  \frac{\|\xi_j^m\calF v\|_{\calL^2(\Reals^d)}^2}{(R_\tau(\delta))^{2m}  } 
   \right)^{1/2}   \leq   \frac{|v|_{\calH^{m}(\mathbb{R}^d)}
}{(2\pi)^{d/2}}  \left( r  L_{\tau}(\delta)  \right)^{-m}.	\end{aligned}
\end{equation*}
Combining the above bounds for $I_1$ and  $I_2$, we  complete the proof of Theorem \ref{Theorem2}.
\end{proof}

Theorem  \ref{Theorem2}  leads to the H\"older-logarithmic stability estimate 
for $\|v_1 - v_2\|_{\calL^2(\Reals^d)}$ in \eqref{type1}, provided $v_1-v_2$ satisfies assumptions   \eqref{eq:ass}, \eqref{eq:ass2} (for fixed $N$)
and $v_1-v_2 \in \calH^m(\Reals^d)$,  as explained in Section \ref{S:intro} after formula \eqref{type2}.
%
%
Estimate \eqref{eq_Theorem2} with $\tau=0$ is similar to well-known stability results for approximate reconstruction  explained in \eqref{eq:first}.
Theorem  \ref{Theorem2}  also implies the following corollary.
\begin{Corollary}\label{C:log}
Let  $v: \Reals^d \rightarrow \Complexes$ be  supported in a compact  set $A \subset \Reals^d$,   where $d \geq 1$, 
and $ |v |_{\calH^{m}(\Reals^d)}  \leq \gamma $ for some integer $m>0$  and  real $\gamma>0$.
Let  $B$ 
be an open set in $\Reals^d$  and  
$\|\calF v  \|_{\calL^{\infty}(B)}  <1$.
 	Then, 
 	for any  $0\leq \mu<m $ , there is  $ c=  c(A,B,\gamma, \mu,m)>0$ such that 
	\begin{equation}\label{log-2}
 		\|v \|_{\calL^{2}(\Reals^d)} \leq    c \left(\ln \frac{1}{\|\calF v  \|_{\calL^{\infty}(B)}}  \right)^{-\mu}.
 	\end{equation}
\end{Corollary}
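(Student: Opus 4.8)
The plan is to apply Theorem~\ref{Theorem2} with the trivial data $w\equiv 0$ on a small cube contained in $B$, and then to optimise the parameter $\tau$. Write $\epsilon:=\|\calF v\|_{\calL^\infty(B)}<1$ for brevity. Since $B$ is open and non-empty, I would first fix $\xi_0\in\Reals^d$ and $r>0$ with $\xi_0+[-r,r]^d\subseteq B$, so that $\|\calF v\|_{\calL^\infty(\xi_0+[-r,r]^d)}\le\epsilon$. To reduce to a cube centred at the origin, I would pass to the modulated function $v'(x):=e^{i\xi_0 x}v(x)$, whose Fourier transform is $\calF v'(\xi)=\calF v(\xi+\xi_0)$; thus $\|\calF v'\|_{\calL^\infty([-r,r]^d)}\le\epsilon$, while $\supp(v')=\supp(v)$ and $\|v'\|_{\calL^2(\Reals^d)}=\|v\|_{\calL^2(\Reals^d)}$.

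The next step is to check that all a priori constants entering Theorem~\ref{Theorem2} are controlled by $A,B,\gamma,m$ alone. Choosing $\sigma=\sigma(A)$ with $A\subseteq\{\sum_j|x_j|\le\sigma\}$ handles the support condition \eqref{eq:ass}. A Poincar\'e/Taylor estimate for compactly supported functions bounds $\|\partial_{x_j}^k v\|_{\calL^2(\Reals^d)}\le C(A,m)\,|v|_{\calH^m(\Reals^d)}$ for $0\le k\le m$; this yields both $\|v\|_{\calL^2}\le C(A,m)\gamma$ (hence, by Cauchy--Schwarz on $A$, $\|v\|_{\calL^1}\le(2\pi)^d N$ for some $N=N(A,\gamma,m)$, which I take $\ge1$ so that $\delta=\epsilon<N$) and, after differentiating $v'=e^{i\xi_0 x}v$ by the Leibniz rule, a bound $|v'|_{\calH^m(\Reals^d)}\le\gamma'$ with $\gamma'=\gamma'(A,B,\gamma,m)$. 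Since $\calC^*_{\tau,\delta}[0]\equiv0$ by \eqref{def_C}--\eqref{def_a}, applying Theorem~\ref{Theorem2} to $v'$ with $w=0$ and $\delta=\epsilon$ gives
\[
 \|v\|_{\calL^2}\le(20\sqrt r)^d N\,(L_\tau(\epsilon))^{d/2+1}(\epsilon/N)^{(1-\tau)^2}+\gamma'(rL_\tau(\epsilon))^{-m}=:A(\epsilon)+B(\epsilon).
\]

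Finally, I would choose $\tau:=\mu/m\in[0,1)$ and read off the rate from \eqref{def_L}. For small $\epsilon$ the maximum defining $L_\tau$ is attained by its second argument, so $B(\epsilon)\sim C(\ln(N/\epsilon))^{-\tau m}=C(\ln(N/\epsilon))^{-\mu}\le C(\ln(1/\epsilon))^{-\mu}$ (using $N\ge1$), whereas $A(\epsilon)$ is, up to polylogarithmic factors, of order $\epsilon^{(1-\tau)^2}$ and hence $o\big((\ln(1/\epsilon))^{-\mu}\big)$. More precisely, $A+B$ and $(\ln(1/\epsilon))^{-\mu}$ are positive and continuous on $(0,1)$, and the product $(A(\epsilon)+B(\epsilon))(\ln(1/\epsilon))^{\mu}$ tends to a finite limit as $\epsilon\to0^+$ and to $0$ as $\epsilon\to1^-$ (for $\mu>0$); it is therefore bounded on $(0,1)$ by some $c=c(A,B,\gamma,\mu,m)$, which is exactly \eqref{log-2}. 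The case $\mu=0$ (where $\tau=0$, $L_0\equiv1$) reduces to the a priori bound $\|v\|_{\calL^2}\le C\gamma$.

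The main obstacle I anticipate is not the optimisation of $\tau$ — which is routine once Theorem~\ref{Theorem2} is in hand — but rather the bookkeeping needed to pass from the general open set $B$ to a centred cube: one must confirm that the modulation $v\mapsto e^{i\xi_0 x}v$ leaves $\|v\|_{\calL^2}$ and the support untouched while changing the Sobolev seminorm only by a factor depending on $B$ (through $\xi_0$), and that the $\calL^1$-bound $N$ can be secured a priori from the seminorm bound via a Poincar\'e-type inequality for compactly supported functions. Verifying that the H\"older term $A(\epsilon)$ is genuinely dominated by the logarithmic term $B(\epsilon)$ uniformly on $(0,1)$ is the one analytic point requiring a little care.
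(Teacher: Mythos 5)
Your proposal is correct and follows essentially the same route as the paper's proof: modulating $v$ by $e^{i\xi_0 x}$ to centre a cube $[-r,r]^d$ inside $B$, securing the a priori constants $\sigma$ and $N$ from the compact support and the bound $|v|_{\calH^m(\Reals^d)}\le\gamma$, and then applying Theorem~\ref{Theorem2} with $w\equiv 0$, $\delta$ essentially equal to $\|\calF v\|_{\calL^\infty(B)}$, and $\tau=\mu/m$, with the logarithmic term dominating the H\"older term as $\delta\to 0$. Your extra bookkeeping (the Leibniz-rule adjustment of the seminorm of the modulated function, taking $N\ge 1$ so that $\delta<N$, and the uniform boundedness of the quotient on $(0,1)$) only makes explicit what the paper leaves implicit.
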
 
\begin{proof}
 Without loss of generality, we can assume that $0 \in B$ by considering   $\tilde{v} := v  e^{i\xi_0 x}$, for a fixed $\xi_0 \in \Reals^d$.
Then, $[-r,r]^d \subset B$ for a sufficiently small $r$. 
Any compact set $A$ lies in $\left\{x \in \Reals^d : \sum_{j=1}^d|x_j| \leq \sigma\right\}$
for a sufficiently large $\sigma$. 
Since $v$ is compactly supported,  the condition  $|v |_{\calH^{m}(\Reals^d)}  \leq \gamma$,  for $m\geq 1$,
implies that the norm $\|v\|_{\calL^{1}(\Reals^d)}$ is bounded above by $(2\pi)^{d}N$, where the constant $N$  depends on $A$, $m$, $\gamma$ only. 
 Applying
  Theorem~\ref{Theorem2} with $\tau:=\mu/m$, $w\equiv 0$, 
  $
    \delta :=\|\calF v  \|_{\calL^\infty([-r,r]^d)} 
  $  and observing that the logarithmic term dominates  
the H\"older term as $\delta \rightarrow 0$ and  
\[
	\|\calF v  \|_{\calL^\infty([-r,r]^d)} \leq \|\calF v  \|_{\calL^{\infty}(B)}<1,
\]
we complete the proof.
\end{proof}

In Section \ref{S:ill-posed}, we show that the exponent $\mu$ in Corollary \ref{C:log} is optimal (or almost optimal for $d=1$),  using  an explicit construction. 
Namely, we prove that, for $d\geq 2$ and any $\mu>m$,  there is some $v$  violating   \eqref{log-2}
no matter how large  constant
$c$    we take.   For $d=1$, the same holds for any $\mu>m+1/2$.  The optimality of the threshold exponent $\mu^* = m$ for the case $d=1$ remains an open question. 
Note that our instability examples also show  an optimality of \eqref{eq_Theorem2}  as a logarithmic stability estimate.



\section{Proof of Lemma \ref{Lemma_C}}\label{S:L:Proof}

%

 To prove Lemma \ref{Lemma_C}, we need the bounds for  series of
  Chebyshev polynomials stated in the following lemma.  
  We will use the standard combinatorial fact that the number of ways  to write $n$ as a sum of 
  $d$ nonnegative integers (ordered) equals  the binomial coefficient
  \begin{equation}\label{eq:binom}
     \binom{n+d-1}{n}= \frac{(n+d-1)!}{n! (d-1)!}.
  \end{equation} 
\begin{Lemma}\label{L:coeff}
Let $\sigma, r, N>0$ and $R\geq r$. 
 If   $v$ satisfies  \eqref{eq:ass} 
then  the following holds. 
\begin{itemize}
\item[(a)]For any $\rho\geq  1$,  $\xi \in [-R,R]^d$  and $k_1,\ldots,k_d \in \Naturals$,  we have
 \begin{equation*}
 	\left|a_{k_1,\ldots,k_d}\left[\calF v|_{[-r,r]^d}\right] \prod_{j=1}^d 
		T_{k_j}\left(\dfrac{\xi_j}{r}\right)\right| \leq     2^d Ne^{ \frac12 r\sigma\rho}
		 \left(\frac{2R}{r\rho}\right)^{\sum_{j=1}^d k_j},
 \end{equation*}
 where $\calF$ is the Fourier transform and $a_{k_1,\ldots,k_d}[\cdot]$ is defined according   to \eqref{def_a}.
\item[(b)]
For any  $\rho' \geq  4R/r$, we have 
\[
	 \left\| \calF v   - \calC_{R,n} \left[\calF v|_{[-r,r]^d}\right] 
	 \right\|_{\calL^\infty([-R,R]^d)}
	 \leq     \left(\frac83\right)^d Ne^{  r\sigma\rho'}  \binom{n+d-1}{n}\left(\frac{4R}{3r \rho'}\right)^n,
\] 
where $ \calC_{R,n}[\cdot]$ is defined according  to \eqref{def_C}.
\end{itemize}
\end{Lemma}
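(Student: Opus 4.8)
The plan is to prove (a) by writing each Chebyshev coefficient as a complex contour integral and exploiting that $\calF v$ is entire, and then to obtain (b) by summing the resulting geometric bound over the tail of the expansion \eqref{eq:series}.

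For (a), I would first reduce \eqref{def_a} to one contour integral per coordinate. The substitution $\xi_j=r\cos\theta_j$ cancels $(r^2-\xi_j^2)^{1/2}$ against the Jacobian and turns $T_{k_j}(\xi_j/r)$ into $\cos(k_j\theta_j)$, so that $a_{k_1,\ldots,k_d}$ becomes $\prod_j\frac{2^{\one[k_j>0]}}{\pi}$ times $\int_{[0,\pi]^d}\calF v(r\cos\theta_1,\ldots,r\cos\theta_d)\prod_j\cos(k_j\theta_j)\,d\theta$. Since this integrand is even and $2\pi$-periodic in each $\theta_j$, I can pass to $[-\pi,\pi]^d$, replace every $\cos(k_j\theta_j)$ by $e^{ik_j\theta_j}$, and set $z_j=e^{i\theta_j}$ to get a $d$-fold integral of $\calF v\bigl(\frac r2(z_1+z_1^{-1}),\ldots,\frac r2(z_d+z_d^{-1})\bigr)$ against $\prod_j z_j^{k_j-1}$ over the unit torus. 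The decisive move is to use the invariance of $\frac r2(z_j+z_j^{-1})$ under $z_j\mapsto z_j^{-1}$ to rewrite the weight as $\prod_j z_j^{-k_j-1}$, and then deform each circle $|z_j|=1$ outward to $|z_j|=\rho$; this is legitimate because $\calF v$ is entire, so the integrand is analytic in each annulus. On $|z_j|=\rho$ one has $|\Im\frac r2(z_j+z_j^{-1})|\le\frac{r\rho}2$, and \eqref{eq:ass} gives $|\calF v(\zeta)|\le N\exp(\sigma\max_j|\Im\zeta_j|)\le Ne^{\frac12 r\sigma\rho}$; estimating the integral by length times maximum yields $|a_{k_1,\ldots,k_d}|\le\prod_j 2^{\one[k_j>0]}\,Ne^{\frac12 r\sigma\rho}\rho^{-\sum_j k_j}$. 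Combining this with the elementary bound $|T_k(t)|\le(2R/r)^k$ for $|t|\le R/r$ (from $T_k(t)=\cosh(k\operatorname{arccosh}t)\le(t+\sqrt{t^2-1})^k\le(2R/r)^k$ when $t\ge1$, and $|T_k|\le1$ on $[-1,1]$) gives the claim of (a).

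For (b), I would invoke \eqref{eq:series}, whose uniform convergence on $[-R,R]^d$ follows from the geometric decay just proved, so that $\calF v-\calC_{R,n}[\calF v|_{[-r,r]^d}]$ equals the tail $\sum_{k_1+\cdots+k_d\ge n}a_{k_1,\ldots,k_d}\prod_j T_{k_j}(\xi_j/r)$ on $[-R,R]^d$. The key quantitative idea is to apply (a) with $2\rho'$ in place of $\rho$: this reproduces the target exponential, since $e^{\frac12 r\sigma(2\rho')}=e^{r\sigma\rho'}$, and creates the geometric ratio $q:=\frac{2R}{r(2\rho')}=\frac{R}{r\rho'}\le\frac14$ by the hypothesis $\rho'\ge 4R/r$. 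Grouping the tail according to $s=k_1+\cdots+k_d$ and using \eqref{eq:binom}, which counts $\binom{s+d-1}{s}=\binom{s+d-1}{d-1}$ terms with $\sum_j k_j=s$, reduces everything to controlling $\sum_{s\ge n}\binom{s+d-1}{d-1}q^s$.

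I would bound this series through the submultiplicativity $\binom{n+u+d-1}{d-1}\le\binom{n+d-1}{d-1}\binom{u+d-1}{d-1}$ (seen at once by viewing these as counts of monomials of a given degree in $d$ variables and noting that multiplication maps pairs of such monomials onto the higher-degree ones). Writing $s=n+u$ factors out $\binom{n+d-1}{d-1}q^n$ and leaves $\sum_{u\ge0}\binom{u+d-1}{d-1}q^u=(1-q)^{-d}\le(4/3)^d$; together with the $2^d$ from $\prod_j 2^{\one[k_j>0]}$ this produces the prefactor $(8/3)^d$, while $q^n=(\frac{R}{r\rho'})^n\le(\frac{4R}{3r\rho'})^n$ matches the claimed base. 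The main obstacle is exactly this uniform-in-$d$ control of the tail: naive ratio tests or the crude bound $\binom{\cdot}{\cdot}\le2^{\cdot}$ break down when $d$ exceeds $n$, and it is the multiplicative splitting of the multiset coefficient that makes the estimate hold simultaneously for all $n$ and $d$. The only other points requiring care are the contour deformation and the interchange legitimizing \eqref{eq:series}, both of which rest on the entirety of $\calF v$ for compactly supported $v$.
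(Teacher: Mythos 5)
Your proposal is correct and follows essentially the same route as the paper: part (a) is the paper's contour-shift argument (your outward deformation to $|z_j|=\rho$ in the annulus/Joukowski picture is exactly the paper's shift, under $z_j=e^{i\theta_j}$, of the periodic integral for $f(z)=\calF v(r\cos z_1,\ldots,r\cos z_d)$ to the contour at height $\ln\rho$), and part (b) likewise applies (a) with $\rho=2\rho'$ and sums the tail of \eqref{eq:series} over total degree with the same $q=\lambda=R/(r\rho')\leq 1/4$. The only local deviation is your tail estimate via the submultiplicativity $\binom{n+u+d-1}{d-1}\leq\binom{n+d-1}{d-1}\binom{u+d-1}{d-1}$ followed by the negative binomial series, where the paper instead bounds $\sum_{s\geq n}\binom{s+d-1}{s}\lambda^s$ by the Taylor--Lagrange remainder of $(1-\lambda)^{-d}$; both one-step variants yield the identical constant $(8/3)^d\binom{n+d-1}{n}(4R/(3r\rho'))^n$.
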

\begin{proof}
%
 For $z_1,\ldots,z_d \in \Complexes$, let 
  \[ 
       f(z_1,\ldots,z_d) := \calF v(r\cos z_1, \ldots, r \cos z_d).
  \]
   Observe that,  for  any $z\in \Complexes$ ,
	\[
		|\Im (\cos z)| \leq \dfrac12 |e^{\Im z} - e^{-\Im z}| \leq \dfrac12 e^{|\Im z|},
	\]
	where $\Im z$ denote the imaginary part of $z$.
  If $|\Im  z_j| \leq  \ln \rho$ for all $1\leq j\leq d$, then,  by assumptions,    for any $x \in \supp(v)$,  we find that 
   \[
   	\left|\sum_{j=1}^d x_j \Im (\cos z_j)\right| \leq  \sum_{j=1}^d |x_j| \rho/2 \leq \dfrac12 \sigma  \rho.
   \]
   Therefore,
    \begin{align*}
 	|f(z_1,\ldots,z_d)| &= \left| \frac{1}{(2\pi)^d} \int_{\Reals^d}
 	e^{i \sum_{j=1}^d r  x_j \cos z_j } v(x) dx\right|\\
	&\leq \frac{1}{(2\pi)^d} \int_{\supp(v)}   e^{\frac12 r \sigma \rho} |v(x)| dx =  Ne^{\frac12 r\sigma\rho}.
 \end{align*}
  Observing that $f$ is $2\pi$-periodic even function with respect to each component and recalling  definition \eqref{def_a}  and that  $T_k(t):= \cos(k\operatorname{arccos}(t))$  for $t\in [-1,1]$, we get
 \[
 	  	  a_{k_1,\ldots,k_d}=	 \frac{2^{\sum_{j=1}^d \one[k_j>0]}}{(2\pi)^d}
  	\int_{0}^{2\pi}
  	\cdots \int_{0}^{2\pi}  e^{i \sum_{j=1}^d k_j\varphi_j }f(\varphi_1,\ldots,\varphi_d) d\varphi_1\ldots d\varphi_d.
 \]
       Since $v$ is compactly supported,  
 we have that $\calF v$ and $f$ are  analytic functions  in $\Complexes^d$. 
  Using the Cauchy integral theorem,  we estimate
  \begin{align*}
  	&\left|\frac{1}{(2\pi)^d}
  	 \int_{0}^{2\pi} e^{i \sum_{j=1}^d k_j\varphi_j }f(\varphi_1,\ldots,\varphi_d) d\varphi_1\ldots d\varphi_d \right|
  	\\ 
  	&\qquad \qquad =   	\left|\frac{1}{(2\pi)^d}
  	\int_{i\ln \rho}^{2\pi+i\ln \rho}\cdots 	\int_{i\ln \rho}^{2\pi+i\ln \rho} e^{i \sum_{j=1}^d k_j z_j }f(z_1,\ldots,z_d) dz_1\ldots dz_d \right|
  	\\
  	& \qquad \qquad \leq   
  	\frac{1}{(2\pi)^d}
  		\int_{i\ln \rho}^{2\pi+i\ln \rho}
  	\cdots 	\int_{i\ln \rho}^{2\pi+i\ln \rho}
  	 Ne^{\frac12 r\sigma\rho} e^{-\sum_{j=1}^d k_j \ln\rho} dz_1\ldots dz_d\\
  	  	& \qquad \qquad = Ne^{\frac12 r\sigma\rho}  \rho^{-\sum_{j=1}^d k_j }.
  \end{align*}
 We complete the proof of  part (a), by observing that  
$
|T_k(t)| \leq (2R/r)^k 
$
for any $|t| \leq R/r$.
 Indeed,  if $|t|\leq 1$ then $|T_k(t)|\leq 1$, otherwise
\[
  \begin{aligned}
 	|T_k(t)| = |\cosh (k\operatorname{arccosh}(t))| = \dfrac{1}{2}\left|(t-\sqrt{t^2-1})^k + (t+\sqrt{t^2-1})^k\right|
 	\leq (2|t|)^k.
   \end{aligned}
\]

  For (b),  let $\rho:= 2\rho'$ and $\lambda := \dfrac{2R}{r\rho} = \dfrac{R}{r \rho'}  \leq \dfrac14$.  
  Using the Taylor theorem with the remainder in the Lagrange form,   we get that,   for some $\lambda'\in[0,\lambda]$,
  \begin{align*}
  	(1-\lambda)^{-d} -  \sum_{k=0}^{n-1} 
	\binom{k+d-1}{k} \lambda^{k}&=  
	\binom{n+d-1}{n} (1-\lambda')^{-d-n} \lambda^n 
	\\
	&\leq  \binom{n+d-1}{n}  \left(\frac{4}{3}\right)^d \left(\frac{4 \lambda }{3}\right)^n.
  \end{align*}
   Using   \eqref{eq:binom}  and part (a), we find that
\begin{align*}
 &\left\| \calF v   - \calC_{R,n} \left[\calF v|_{[-r,r]^d}\right] 
	 \right\|_{\calL^\infty([-R,R]^d)}
	\leq \sum_{k=n}^{\infty}
	\sum_{k_1+\cdots +k_d =k }    2^d  Ne^{\frac12 r\sigma\rho}  \lambda^{k} 
	\\ &\qquad  \qquad\qquad \qquad = 	 2^{d}   Ne^{r\sigma\rho'} \left( (1-\lambda)^{-d} - 
	\sum_{k=0}^{n-1} 
	\binom{k+d-1}{k} \lambda^{k}\right) \\
	&\qquad \qquad\qquad \qquad \leq   	   \left(\frac83\right)^d Ne^{ r\sigma\rho'}  \binom{n+d-1}{n}  \left(\frac{4 \lambda }{3}\right)^n.
\end{align*}
This  completes the proof of  Lemma \ref{L:coeff}.
 \end{proof}

Now we are ready to proceed to  Lemma \ref{Lemma_C}.
Recall that $|T_k(t)| \leq  1$, if $|t|\leq 1$.
	Using \eqref{def_a} and the assumptions,  we find  that,  for any $k_1,\ldots,k_d \in \Naturals$,
	\begin{align*}
		 &\left|a_{k_1,\ldots, k_d}[w] -a_{k_1,\ldots, k_d} \left[\mathcal{F} v|_{[-r,r]^d} \right] \right| =
		 \left| a_{k_1,\ldots, k_d}\left[ w - \mathcal{F} v|_{[-r,r]^d} \right]\right|
		\\ &\qquad \qquad  \qquad \qquad \leq 
	\int_{-r}^{r}\cdots \int_{-r}^{r} \delta \prod_{j=1}^d \left(\frac{2^{ \one[k_j>0]} \left|T_{k_j} \left(\frac{\xi_j}{r}\right)\right|}
	{\pi (r^2 - \xi_j^2)^{\frac12}} \right) d\xi_1 \ldots d\xi_d
	\leq 2^d \delta.
	\end{align*}
	Recalling also  that $|T_k(t)| \leq (2|t|)^k$ for $|t|\geq 1$, we get 
	\begin{align*}
		 \left\| \mathcal{C}_{R,n}[w] - \mathcal{C}_{R,n}\left[\mathcal{F} v|_{[-r,r]^d}\right] 	
		 \right\|_{\calL^{\infty}([-R,R]^d)}
		 	&\leq \sum_{k=0}^{n-1} \sum_{k_1+\cdots+k_d =k} 2^d \delta \left(\frac{2R}{r}\right)^k
		 	\\=  2^d \delta \sum_{k=0}^{n-1} \binom{k+d-1}{k}\left(\frac{2R}{r}\right)^k
		 	&\leq  2^d \delta \binom{n+d-1}{n}\left(\frac{2R}{r}\right)^n.
	\end{align*}
	Since $n\geq 1$ and $d\geq1$, we have that 
	\[ 
	\binom{n+d-1}{n} = \binom{n+d-2}{n-1} +\binom{n+d-2}{n}  \leq \binom{n+d-1}{n-1} +\binom{n+d-1}{n+1},
	\] where 
	$\binom{n+d-2}{n}$ and $\binom{n+d-1}{n+1}$
	 are taken to be $0$ if $d=1$. Thus,  
	we get
	\begin{equation}\label{eq:4last}
	 \binom{n+d-1}{n} \leq \frac12 \sum_{j=0}^{n+d-1} \binom{n+d-1}{j} =  2^{n+d-2}.
	\end{equation}
		  Combining the above and using  Lemma \ref{L:coeff}(b),
		  	   we complete the proof of Lemma \ref{Lemma_C}.
	

\section{Exponential ill-posedness of Problem \ref{Problem1}}\label{S:ill-posed}

In this section,   we prove that Problem \ref{Problem1} is exponentially ill-posed.  For ease of presentation, we employ the asymptotic notations 
$O(\cdot)$ and $\Omega(\cdot)$ always
 referring to the passage of the parameter $n$ to infinity.
For two sequences of real numbers $a_n$ and $b_n$, we say
$a_n=O(b_n)$ if there exist constants $C>0$ and $n_0\in \Naturals$ such that $|a_n|\le C\,|b_n|$ for all 
$n>n_0$.
We say 
$a_n=\Omega(b_n)$  if $a_n>0$ always and $b_n = O(a_n)$.

First, we consider  an explicit function   $v_{n,m} :\Reals^2 \rightarrow \Complexes$   similar to  the one given  by Mandache \cite[Theorem 2]{Mandache2001}.
Let $g \in C^{\infty}(\Reals)$ 
 be a nontrivial function supported in a compact set  of positive real numbers. For example, one can take 
 \begin{equation}\label{def_g}
 	g(t):= 
 	\begin{cases}
 		\exp\left(\frac{1}{(t-1)(t-2)}\right), &\text{if } 1<t<2,\\
 		0, & \text{otherwise.}
 	\end{cases}
 \end{equation}
  For  integer $n \geq 1$ and $m\geq 0$, let $v_{n,m}$  be defined by
\begin{equation*}
	v_{n,m}(x_1,x_2) := n^{-m} e^{i n\varphi}  g(t),
\end{equation*}
where $t\geq 0$, $\varphi \in [0,2\pi)$, and $(x_1,x_2) =(t \cos \varphi, t\sin\varphi)$.  Observe  that, as $n\rightarrow \infty$, 
\begin{equation}\label{vnm1}	
	\|v_{n,m}\|_{\calL^{2}(\Reals^2)} = \Omega( n^{-m}).
\end{equation}
It is also straightforward  that 
\begin{equation}\label{vnm2}
	 \|v_{n,m}\|_{C^m(\Reals^2)} = O(1);
\end{equation}
see, for example, the arguments of \cite[Theorem 2]{Mandache2001}. 
\begin{Lemma}\label{l:vnm}
For any  $m\in \Naturals$ and $r>0$, we have
$
 \|\calF  [\Re v_{n,m} ]\|_{\calL^{\infty}([-r,r]^2)} = O(e^{-n}).
$
\end{Lemma}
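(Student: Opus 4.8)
The plan is to pass to polar coordinates, extract a Bessel function of order $n$ from the angular integral, and then exploit the super-exponential decay of $J_n$ at bounded arguments.

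First I would write $\xi = \rho(\cos\psi,\sin\psi)$ and $x=(t\cos\varphi,t\sin\varphi)$, so that $\xi\cdot x = \rho t\cos(\varphi-\psi)$. Substituting into the definition of $\calF$ and carrying out the $\varphi$-integration via the Jacobi--Anger identity $\int_0^{2\pi} e^{iz\cos\theta}e^{in\theta}\,d\theta = 2\pi i^n J_n(z)$ gives
\[
  \calF v_{n,m}(\xi) = \frac{n^{-m} i^n e^{in\psi}}{2\pi}\int_0^\infty J_n(\rho t)\,g(t)\,t\,dt .
\]
The angular integral is the only place the oscillation $e^{in\varphi}$ enters, and it is precisely what forces the order-$n$ Bessel function to appear; this is the conceptual heart of the argument.

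Next I would invoke the elementary bound $|J_n(z)|\le (|z|/2)^n/n!$, valid for all real $z$, which follows from the Poisson integral representation of $J_n$ together with the Beta-function evaluation $\int_{-1}^1(1-s^2)^{n-1/2}\,ds = \sqrt{\pi}\,\Gamma(n+\tfrac12)/n!$. Since $\xi\in[-r,r]^2$ forces $\rho=|\xi|\le r\sqrt2$ and $\supp g\subseteq[1,2]$ forces $t\le 2$, the argument satisfies $\rho t/2 \le r\sqrt2$ uniformly, whence
\[
  |\calF v_{n,m}(\xi)| \le \frac{n^{-m}}{2\pi}\int_1^2 \frac{(\rho t/2)^n}{n!}\,|g(t)|\,t\,dt
  \le \frac{C_g\,n^{-m}}{2\pi}\cdot\frac{(r\sqrt2)^n}{n!},
  \qquad C_g:=\int_1^2|g(t)|\,t\,dt .
\]
Because $a^n/n!\to 0$ faster than any geometric rate for fixed $a$, we have $(r\sqrt2)^n/n! = O(e^{-n})$, and therefore $\|\calF v_{n,m}\|_{\calL^\infty([-r,r]^2)} = O(e^{-n})$.

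Finally, to reach the real part I would write $\Re v_{n,m} = \tfrac12\bigl(v_{n,m}+\overline{v_{n,m}}\bigr)$. Since $g$ is real, $\overline{v_{n,m}} = n^{-m}e^{-in\varphi}g(t)$ has the same form with $n$ replaced by $-n$, so the angular integral produces $J_{-n}=(-1)^nJ_n$ and the identical magnitude bound applies; equivalently, $\calF[\overline{v_{n,m}}](\xi)=\overline{\calF v_{n,m}(-\xi)}$ and $[-r,r]^2$ is symmetric, so the two sup-norms agree. Combining with the bound above and the triangle inequality yields $\|\calF[\Re v_{n,m}]\|_{\calL^\infty([-r,r]^2)} = O(e^{-n})$. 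The main obstacle is the first step: correctly isolating the order-$n$ Bessel function from the angular integral and confirming that it is exactly $J_n$ rather than a lower order; once that is in place and the Poisson bound is invoked, the decay is immediate and the remainder is routine bookkeeping.
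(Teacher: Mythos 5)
Your proof is correct, and it takes a genuinely different route for the key estimate while sharing the paper's overall skeleton. Both you and the paper pass to polar coordinates, reduce the lemma to the decay in $n$ of the angular integral $\int_0^{2\pi} e^{it|\xi|\cos(\varphi-\varphi_0)}e^{in\varphi}\,d\varphi$, and dispose of the real part by the same conjugation symmetry ($\calF[\overline{u}](\xi)=\overline{\calF u(-\xi)}$ together with the symmetry of $[-r,r]^2$ under $\xi\mapsto-\xi$). The difference is how that oscillatory integral is bounded: the paper shifts the contour to the horizontal line $\Im z=1$ by the Cauchy integral theorem, where $|e^{inz}|=e^{-n}$ while $|e^{it|\xi|\cos z}|$ stays uniformly bounded for $\xi\in[-r,r]^2$ and $t\in\supp(g)$, so the $O(e^{-n})$ bound falls out in one line with no special functions. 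You instead identify the integral exactly as $2\pi i^n e^{in\varphi_0}J_n(t|\xi|)$ via Jacobi--Anger and invoke the Poisson-representation bound $|J_n(z)|\le (|z|/2)^n/n!$; your identity and Beta-function evaluation check out, and the concern you raise about the Bessel order is resolved correctly (the $e^{in\varphi}$ factor picks out $J_{-n}=(-1)^nJ_n$, of the same magnitude). What your route buys is a strictly stronger, super-exponential bound $O\left((r\sqrt{2})^n/n!\right)$ rather than merely $O(e^{-n})$ --- morally the paper's contour shift optimized over the shift height rather than fixed at $1$ --- at the cost of importing Bessel machinery; the paper's route buys brevity and self-containedness, and $O(e^{-n})$ is all that Theorem \ref{T:last} needs. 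Two cosmetic remarks: you assume $\supp(g)\subseteq[1,2]$, which holds for the explicit $g$ of \eqref{def_g} but not for the general $g$ allowed in the construction --- your bound survives verbatim with $t\le\max\supp(g)$; and your factor $\tfrac12$ in $\Re v_{n,m}=\tfrac12(v_{n,m}+\overline{v_{n,m}})$ is correct (the paper's displayed identity omits it, harmlessly for an $O$-estimate).
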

\begin{proof}
%
Writing the Fourier transform in the polar coordinates, we find that
\[
	\calF v_{n,m} (\xi) = \frac{n^{-m}}{(2\pi)^2} \int_{\supp (g)}   t g(t)  \left( \int_{0}^{2\pi} e^{i t |\xi| \cos(\varphi-\varphi_0)} e^{in\varphi} d\varphi\right) dt,
\]
where $\xi = (|\xi|\cos \varphi_0, |\xi|\sin \varphi_0)^T \in \Reals^2$. 
Using the Cauchy integral theorem, we get that, uniformly over all  $\xi \in [-r,r]^2$ and $t \in \supp(g)$,
\[	
	\int_{0}^{2\pi}  e^{i t |\xi| \cos (\varphi - \varphi_0) } e^{in\varphi} d\varphi
	= O\left(
	\int_{0+i}^{2\pi+i}  e^{i t |\xi| \cos (z - \varphi_0) } e^{inz} dz
	\right) = O(e^{-n}).
\]
 Observing also  $\calF[\Re v_{n,m}](\xi) = \calF v_{n,m} (\xi)+\calF v_{n,m} (-\xi)^*$,  where $z^*$ denotes the complex conjugate of $z\in \Complexes$, the required bound follows. 
\end{proof}

The following  theorem  implies that the exponent $\mu$ in Corollary  \ref{C:log} is  optimally bounded above by $m$ (or almost optimally, for $d=1$) since 
$  |v|_{\calH^m(\Reals^d)} \leq C \|v\|_{C^m(\Reals^d)}$ 
for a compactly supported $v$, where  $C$  depends on $\supp(v)$   only.
\begin{Theorem}\label{T:last}
 Let $d\geq 1$ and  $m\geq 0$ be integers. 
Let $\mu$ be a positive real number satisfying either $\mu> m$   if  $d\geq 2$, 
or $\mu >m+1/2$  if $d=1$.  Then, 
for any bounded open set $A \subset \Reals^d$, compact set $B \subseteq \Reals^d$, and positive constants $\gamma, c$, there exists $v:\Reals^d \rightarrow \Reals$ such that:
\begin{align}
	\supp(v)  \subseteq  A, \qquad      &\|v\|_{C^m(\Reals^d)} \leq \gamma, \qquad 
	\|\calF v \|_{\calL^\infty(B)}< 1,
	 \nonumber  \\
	 \|v\|_{\calL^{2}(\Reals^d)} &> c\left( \ln  \frac{1}{\|\calF v \|_{\calL^\infty(B)}}\right)^{-\mu}. \label{T:est}
\end{align}
\end{Theorem}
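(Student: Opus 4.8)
The plan is to exhibit, for the given data $A,B,\gamma,c$, a single function $v$ drawn from a one-parameter family built out of the oscillating bumps $v_{n,m}$ of \eqref{def_g}, so that for one suitably large $n$ all four requirements hold at once. The driving mechanism is the mismatch already recorded in \eqref{vnm1}, \eqref{vnm2} and Lemma \ref{l:vnm}: the $\calL^2$-norm decays only polynomially, like $n^{-m}$, whereas $\calF v$ becomes exponentially small on any fixed cube, like $e^{-n}$. Hence $\ln(1/\|\calF v\|_{\calL^\infty(B)})=\Omega(n)$ while $\|v\|_{\calL^2}=\Omega(n^{-m})$, and the claimed inequality \eqref{T:est} reduces to comparing $\Omega(n^{-m})$ with $c\,(\Omega(n))^{-\mu}=O(n^{-\mu})$, which holds for large $n$ precisely when $\mu>m$. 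Throughout I would absorb the fixed data into harmless normalisations: translating and rescaling the variable to force $\supp v\subseteq A$ (this multiplies $\calF v$ by a phase and rescales $B$ to a fixed compact set, so Lemma \ref{l:vnm} still applies and the $O(e^{-n})$ and $\Omega(n^{-m})$ rates persist), and multiplying $v$ by a fixed amplitude $\lambda\in(0,1]$ to enforce $\|v\|_{C^m}\le\gamma$. Since multiplying $v$ by $\lambda$ scales both $\|v\|_{\calL^2}$ and $\|\calF v\|_{\calL^\infty(B)}$ by $\lambda$, it shifts $\ln(1/\|\calF v\|_{\calL^\infty(B)})$ by only the constant $\ln(1/\lambda)$ and thus cannot affect the power-of-$n$ comparison.

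For $d\ge 2$ I would take $v$ to be a rescaled, translated copy of $\Re v_{n,m}$ in the first two coordinates, tensored with a fixed real bump $\chi\in C_c^\infty(\Reals^{d-2})$ in the remaining ones when $d>2$. Then $\supp v\subseteq A$ after rescaling, $\|v\|_{C^m(\Reals^d)}=O(1)$ by \eqref{vnm2} and the Leibniz rule, and $\|v\|_{\calL^2(\Reals^d)}=\Omega(n^{-m})$ by \eqref{vnm1} (taking the real part and the tensor factor only change constants). Since $\calF v$ factorises as $\calF_2[\Re v_{n,m}]$ times $\calF_{d-2}\chi$, and $\calF_{d-2}\chi$ is bounded on compact sets, Lemma \ref{l:vnm} gives $\|\calF v\|_{\calL^\infty(B)}=O(e^{-n})$, which is $<1$ for large $n$. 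Feeding these three rates into the comparison above and using $\mu>m$ yields \eqref{T:est} for all sufficiently large $n$; fixing one such $n$ produces the desired $v$.

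The case $d=1$ is genuinely different, and this is where the extra $1/2$ enters, since there is no angular variable to exploit directly. I would obtain a one-dimensional function by projecting the two-dimensional construction: set $w_n(x_1):=\int_{\Reals}[\Re v_{n,m}](x_1,x_2)\,dx_2$, again rescaled into $A$ and multiplied by a fixed amplitude. This keeps $\supp w_n$ compact and $\|w_n\|_{C^m(\Reals)}=O(1)$. The Fourier smallness is inherited, because $\calF_1 w_n(\xi_1)$ is a constant multiple of $\calF_2[\Re v_{n,m}](\xi_1,0)$, so Lemma \ref{l:vnm} still gives $\|\calF_1 w_n\|_{\calL^\infty([-r,r])}=O(e^{-n})$ and hence $\ln(1/\|\calF_1 w_n\|_{\calL^\infty([-r,r])})=\Omega(n)$. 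The loss occurs only in the $\calL^2$-norm. By Plancherel, $\|w_n\|_{\calL^2(\Reals)}^2$ is proportional to $\int_{\Reals}|\calF_2[\Re v_{n,m}](\xi_1,0)|^2\,d\xi_1$, and writing the radial Fourier transform of $g$ through the Bessel kernel $J_n$ shows this equals a constant times $n^{-2m}\int_0^\infty|F_n(\rho)|^2\,d\rho$, where $F_n(\rho):=\int_0^\infty t\,g(t)\,J_n(t\rho)\,dt$ is the order-$n$ Hankel transform of $g$. The Hankel--Plancherel identity gives the weighted integral $\int_0^\infty|F_n(\rho)|^2\rho\,d\rho=\int_0^\infty|g(t)|^2 t\,dt$, a positive constant independent of $n$, while standard Bessel asymptotics localise the mass of $F_n$ to the band $\rho\asymp n$ (indeed $J_n(t\rho)$ is exponentially small for $t\rho<n$, and $F_n$ decays super-polynomially for $\rho\gg n$ since $g$ is smooth). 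Removing one factor $\rho\asymp n$ from the weighted integral then yields $\int_0^\infty|F_n(\rho)|^2\,d\rho=\Omega(n^{-1})$, whence $\|w_n\|_{\calL^2(\Reals)}=\Omega(n^{-m-1/2})$. Comparing with $\ln(1/\|\calF_1 w_n\|_{\calL^\infty([-r,r])})=\Omega(n)$ now forces the threshold $\mu>m+1/2$, matching the statement.

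I expect the one real obstacle to be this last $\calL^2$ lower bound for $d=1$: everything for $d\ge 2$ is a direct assembly of \eqref{vnm1}, \eqref{vnm2} and Lemma \ref{l:vnm}, whereas for $d=1$ one must control the order-$n$ Hankel transform of the fixed profile $g$ and, in particular, justify that a definite fraction of its $\calL^2$-mass survives at frequencies $\rho\asymp n$ rather than cancelling. The Hankel--Plancherel identity supplies the weighted norm for free, so the genuine work is the mass-localisation step via uniform (Airy-regime) Bessel estimates; the factor $n^{-1/2}$ it produces is exactly the gap between the $d=1$ threshold $m+1/2$ obtained here and the conjecturally optimal value $m$ left open after Corollary \ref{C:log}.
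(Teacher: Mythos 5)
Your $d\geq 2$ argument coincides with the paper's: tensor $\Re v_{n,m}$ with a fixed smooth bump in the remaining $d-2$ variables (the paper uses $\prod_{j\geq 3}g(x_j)$, you use a generic $\chi\in C_c^\infty$, which is immaterial), rescale and translate into $A$, normalise the amplitude, and compare $\|v_n\|_{\calL^2}=\Omega(n^{-m})$ from \eqref{vnm1} against $\|\calF v_n\|_{\calL^\infty(B)}=O(e^{-n})$ from Lemma \ref{l:vnm}; that part is complete. For $d=1$ you choose essentially the same test functions (a one-variable projection of the planar construction), and your reduction is sound up to and including the identity $\calF_1 w_n(\xi_1)=\mathrm{const}\cdot\calF_2[\Re v_{n,m}](\xi_1,0)$ and the Plancherel reformulation through the order-$n$ Hankel transform $F_n(\rho)=\int_0^\infty t\,g(t)J_n(t\rho)\,dt$. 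But the pivotal step is only asserted: from the $n$-independent Hankel--Plancherel identity $\int_0^\infty|F_n(\rho)|^2\rho\,d\rho=\int_0^\infty|g(t)|^2t\,dt$ you need an \emph{upper} tail bound, $\int_{Cn}^\infty|F_n(\rho)|^2\rho\,d\rho\leq\frac12\int_0^\infty|g|^2t\,dt$ uniformly in $n$, before you may ``remove one factor $\rho\asymp n$'' and conclude $\int_0^\infty|F_n|^2\,d\rho=\Omega(n^{-1})$. The exponential smallness of $J_n(t\rho)$ for $t\rho<n$ does not help with this tail; what is needed is super-polynomial decay of $F_n(\rho)$ for $\rho\geq Cn$ with constants uniform in $n$, i.e.\ uniform (Airy/transition-regime) Bessel asymptotics or a uniform-in-$n$ integration-by-parts scheme. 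You flag this yourself as ``the genuine work,'' and as written it is a genuine gap, not a routine verification.

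The paper's own proof of the $d=1$ case avoids all Bessel machinery. It sets $h_{n,m}(x):=\int_{-2}^{2}\Re v_{2n,m}(t,x)\,dt$ --- integrating out the \emph{first} variable and, crucially, doubling the angular index to the even value $2n$ --- and then bounds $h_{n,m}$ pointwise from below: for $|x|\leq(2n)^{-1}$ the angular phase satisfies $|2n\varphi|\leq 1+O(n^{-1})$ near the ray $t>0$ and $2n\varphi=2n\pi+O(1)$ near the ray $t<0$, so both rays contribute with the same sign (this is exactly what the even index buys; a pointwise argument with general $n$ would face leading-order cancellation between the rays, a parity issue your Fourier-side route happens to sidestep), and since $g\geq 0$ by \eqref{def_g} one gets $h_{n,m}(x)=\Omega(n^{-m})$ on an interval of length $\asymp n^{-1}$, whence $\|h_{n,m}\|_{\calL^2(\Reals)}=\Omega(n^{-m-1/2})$ in two lines. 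From there the comparison with $O(e^{-n})$ and the threshold $\mu>m+1/2$ proceeds exactly as in your proposal. So: either complete your Hankel localisation step with a genuinely uniform Bessel estimate, or replace it by the paper's elementary pointwise bound, remembering to pass to the even index $2n$ and, if you keep a pointwise argument, to project along the variable for which the two intersection rays reinforce.
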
 
\begin{proof}
First, we consider the case $d\geq 2$.
Define $w_{n,m}: \Reals^d \rightarrow \Complexes$ by
\[
w_{n,m}(x) := \Re v_{n,m}(x_1,x_2)  \prod_{j=3}^d  g(x_j),
\]
 where $g$ is  given in \eqref{def_g}.  Observe that  $w_{n,m}\in C^m(\Reals^d)$ and is   compactly supported.
  Using \eqref{vnm2} and  taking any  $x_0 \in A$ and  sufficiently small   $\alpha>0$ and sufficiently big  $\beta>0$, we get that 
   the functions $v_n:\Reals^d \rightarrow \Reals$ defined by
 \[
    v_n(x):=\alpha w_{n,m}\left(\beta(x - x_0)\right)
 \]
 are supported in $A$ and 
  satisfy
 $ \|v_n\|_{C^m(\Reals^d)} \leq \gamma$ for all  $n>0$. 
 Next, taking   $r$ to be sufficiently large and  observing from  \eqref{def_g} that $g$ is supported in $ [1,2]$ and $|g(t)|\leq 1$ for all $t\in \Reals$,   we ensure
 \[
 	                                  \left\|\calF v_n\right\|_{\calL^\infty(B )}
 	                                 = O\left(  \left\|\calF[ \Re v_{n,m}] \right\|_{\calL^\infty([-r,r]^2)} \right). 
 \]
 Using \eqref{vnm1} and Lemma \ref{l:vnm}, we get that, 
 as $n \rightarrow \infty$,
 \[
 	 \|v_n\|_{\calL^{2}(\Reals^d)} =  \Omega(n^{-m}) \qquad \text{and} \qquad 
 	  \left\|\calF  v_n  \right\|_{\calL^\infty(B )} = O(e^{-n}).
 \]
Taking $v\equiv v_n$ for sufficiently large $n$, we get \eqref{T:est}. 
  
For the case $d=1$, consider the functions  $h_{n,m}:\Reals \rightarrow \Complexes$ defined by
 \[
 	h_{n,m} (x) := \int_{\Reals} \Re v_{2n,m} (t,x) dt
 	= \int_{-2}^{2} \Re v_{2n,m} (t,x) dt.
 \]
  From \eqref{vnm2}, we  derive that  
 \[
 	\|h_{n,m}\|_{C^m(\Reals)} = O(1).
 \]
 Using Lemma \ref{l:vnm}, we also  find  that, for any fixed $r>0$, 
 \[
 	\| \calF  h_{n,m}\|_{\calL^\infty([-r,r])} = 2\pi \|\calF [\Re v_{2n,m}](0,\cdot)\|_{\calL^\infty([-r,r])}
   = O(e^{-n}).
 \]
  Note that if $|x| \leq (2n)^{-1}$ then, by the definition of $v_{n,m}$,
 \[
 	h_{n,m}(x) \geq n^{-m} \left(2 \cos 1 \int_{1}^2 g(t) dt  + O(n^{-1}) \right) = \Omega(n^{-m}).
 \] 
Therefore, 
\[
\|h_{n,m}\|_{\calL^2(\Reals)} \geq \left(n^{-1}  \min_{|x|\leq (2n)^{-1}}  |h_{n,m}(x)|  \right)^{1/2} = \Omega(n^{-m-1/2}).
\]
We complete the proof by considering functions of the form $\alpha h_{n,m}\left(\beta^{-1}(x-x_0)\right)$ 
and
 repeating the arguments of the case $d\geq 2$. 
\end{proof}

\section{Further developments}\label{S:directions}

Studies of the present work can be developed, in particular, in the following directions. These issues will be addressed in further articles.

\subsection{Estimates in $\calH^s$}
 
 Theorem \ref{Theorem2} and Corollary \ref{C:log} admit analogs 
 in the norm  $\|\cdot\|_{\calH^s(\Reals^d)}$   in place of $\|\cdot\|_{\calL^2(\Reals^d)}$ in the left-hand sides of \eqref{eq_Theorem2} and 
 \eqref{log-2}. 
   Recall that, for real $s$,
  the Sobolev space $\calH^s(\Reals^d)$ and its norm can be defined by
\begin{equation*}\label{def_Hm}
	\begin{aligned}
	\calH^{s}(\mathbb{R}^d) &:= \left\{u \in \calL^2(\mathbb{R}^d):\ \mathcal{F}^{-1} (1+|\xi|^2)^{s/2} \mathcal{F} u \in  \calL^2(\mathbb{R}^d) \right\},
\\
	||u||_{\calH^{s}(\mathbb{R}^d)} &:= \left\|\mathcal{F}^{-1} (1+|\xi|^2)^{s/2} \mathcal{F} u \right\|_{\calL^2(\mathbb{R}^d)}.
\end{aligned}
\end{equation*}
  For example,
 in a similar way with \eqref{eq_Theorem2} and 
 \eqref{log-2}, one can estimate
 $		\|v - \mathcal{F}^{-1}\calC^*_{\tau,\delta}[w] \|_{\calH^s(\Reals^d)}$
  and  $\|v\|_{\calH^s(\Reals^d)}$ for arbitrary real 
  $s$ and $m$ such that $s<m$ and $m>-\dfrac{d}{2}$. 
    In particular,  under assumptions 
    \eqref{eq:ass}, \eqref{eq:ass2}, and  $\|v\|_{\calH^m(\Reals^d)} \leq \gamma$,
    one can show that 
  \begin{equation}\label{eq:appod}
  	\|v - \mathcal{F}^{-1}\calC^*_{\tau,\delta}[w] \|_{\calH^s(\Reals^d)} 
  	\leq c \left(\ln (3+\delta^{-1})\right)^{-\tau(m-s)}
  \end{equation}
  for any $\tau \in (0,1)$ and some constant $c = c(N, \sigma, r, m, s, \gamma,  \tau,d)>0.$

 The results of the  type \eqref{eq:appod} with $s<0$ can be used for apodized reconstructions based on  
 $\calC^*_{\tau,\delta}$. Indeed,  let $\phi$ be a real-valued compactly supported function on $\Reals^d$ and $\hat{\phi} = \calF \phi$. We assume that $\phi$ satisfies the following: 
 \[
 \int_{\Reals^d} \phi(x) dx =1;
 \]
 and, for some $\ell>0$,
 \[
 N_\ell = N_\ell[\phi] :=\sup_{\xi \in \Reals^d}   (1+|\xi|^2)^{\ell/2}  |\hat{\phi}(\xi)|<+\infty.
 \]
One may also assume, for example, that $\phi$ is non-negative, spherically symmetric, and non-increasing in $|x|$.  

Let $0<p<\ell$ and $s:=p-\ell$.  Then, for any $u \in  \calH^s(\Reals^d)$,  we   estimate
\begin{equation}\label{eq:phi-u}
\begin{aligned}
	\left\|\phi *u   \right\|_{\calH^p(\Reals^d)}  
	&= \left\|  \calF^{-1} \left[ (1+|\xi|^2)^{p/2} \hat{\phi}  \hat{u}\right] \right\|_{\calL^2(\Reals^d)} 
  \\
  &\leq c(\ell)  \left\|  \calF^{-1}[ (1+|\xi|^2)^{s/2} \hat{u}] \right\|_{\calL^2(\Reals^d)}
  =  N_\ell  \left\| u   \right\|_{\calH^s(\Reals^d)},
\end{aligned}
\end{equation}
where $*$ denote the convolution operator and $\hat{u}:=\calF u$. 
Combining \eqref{eq:appod}  and \eqref{eq:phi-u} with $u:=  v - \mathcal{F}^{-1}\calC^*_{\tau,\delta}[w]$, we obtain that 
\begin{equation}\label{last:apo}
	\left\|\phi * v -  \phi*\mathcal{F}^{-1}\calC^*_{\tau,\delta}[w]  \right\|_{\calH^p(\Reals^d)}  
	\leq c  N_{\ell}   \left(\ln (3+\delta^{-1})\right)^{-\tau (m  - s)},
\end{equation}
  under the same assumptions  on $v$,  $m$, $s = p-\ell$ as  in  \eqref{eq:appod}.  
  Estimate \eqref{last:apo} shows the logarithmic stability in  $\calH^p(\Reals^d)$
  for   the regularized (apodized) reconstruction
   $\tilde{v}:=\phi*v$ from  the given data $w$.   For more details about regularized (apodized) reconstructions for problems similar to Problem \ref{Problem1},  see, for example, \cite{AMS2009} and references therein.

%
%
  
  \subsection{Super-exponential decay}
  
  The results of the present works also admit anologs for the case when condition \eqref{eq:ass}
  is replaced by the condition
  \begin{equation} \label{N:eq:ass}
   Q_v(\lambda) :=   \dfrac{1}{(2\pi)^d}  \int\limits_{\Reals^d} e^{\lambda|x|}|v(x)|dx \leq  N\exp\left(\sigma \lambda^{\nu}\right), 
   \qquad 
   \text{for all $\lambda\geq 0$,}
  \end{equation}   
  where constants $N>0$, $\sigma>0$ and $\nu\geq 1$ are given a priori. 
  The case  $\nu=1$ corresponds  to   compactly supported $v$.

  In particular, under assumptions 
  \eqref{eq:ass2}, \eqref{N:eq:ass}, and  $\|v\|_{\calH^m(\Reals^d)} \leq \gamma$,
    one can show that
  \begin{equation}\label{eq:appod}
  	\|v - \mathcal{F}^{-1}\calC^*_{\tau,\delta}[w] \|_{\calL^2(\Reals^d)} 
  	\leq c \left(\ln (3+\delta^{-1})\right)^{-\tau m},
  \end{equation}
   for any $\tau \in \left(0,1-\sqrt{1-(1-\alpha) \nu^{-1}}\right)$ and some constant $c = c(N, \sigma, \nu,
  r,m,  \gamma,   \tau,d)>0.$

%

  \subsection{PSWF approach}
  Theorem \ref{T:last} and Corollary \ref{C:log} show that  the logarithmic estimate \eqref{log-2} is optimal  with respect to the exponent $\mu$.  Thus, estimate  \eqref{eq_Theorem2} is  also optimal in this logarithmic sense.  However, we believe that  
     \eqref{eq_Theorem2}   can be  improved  
 with respect to  dependence on a priori parameters 
 $N$, $r$, $\sigma$,  $\gamma$.    A natural approach is to employ  more advanced basis in place of Chebyshev polynomials and  exponents of the inverse Fourier transform.
     For the case when $v$ is compactly supported like in \eqref{eq:ass},  one can use, for example, the basis of the prolate spheroidal wave functions (PSWFs).  
     
     The PSWFs   $(\Psi_{k,\sigma})_{k\in \Naturals}$ can be  defined  as the eigenfunctions of the spectral problem 
     \[
     		\int_{-1}^1  \frac{\sin (\sigma(x-y))}{\pi(x-y)} \psi(y) dy = \lambda \psi(x), 
     \]
     where $\psi \in \calL^2(\Reals)$ and $\lambda$ is the spectral parameter. The functions $(\Psi_{k,\sigma})_{k\in \Naturals}$ form simultaneously an orthogonal basis in $\calL^2([-1,1])$
     and an orthogonal basis in the subspace of $\calL^2(\Reals)$ consisting of the functions  whose Fourier transform is supported in the interval $[-\sigma,\sigma]$. For more information about 
     PSWFs, see, for example, \cite{Slepian1983, BK2014} and references therein.





\begin{thebibliography}{99}

\bibitem{Alessandrini1988}
 G. Alessandrini, 
  Stable determination of conductivity by boundary measurements, 
 \textit{Applicable Analysis}, \textbf{27} (1988), 153--172.


\bibitem{AMS2009}  
N. Alibaud, P. Mar\'echal,  Y. Saesor. 
A variational approach to the inversion of truncated
fourier operators. \textit{Inverse Problems}, \textbf{25}(4) (2009), 045002.

%

\bibitem{BLT2010}
G.~Bao, J.~Lin, F.~Triki,
A multi-frequency inverse source problem,
\textit{Journal of Differential Equations}, \textbf{249} (2010), 3443--3465.


\bibitem{BM2009}
G. Beylkin, L. Monz\'on,
Nonlinear inversion of a band-limited Fourier transform, \textit{Applied and Computational Harmonic Analysis}, \textbf{27}(3) (2009), 351--366.







\bibitem{BK2014}
A.~Bonami,  A.~Karoui,  
Uniform bounds of prolate spheroidal wave functions and eigenvalues decay. 
\textit{C. R. Math. Acad. Sci. Paris} \textbf{352} (3) (2014),  229--234.


\bibitem{Bracewell2000}
R.N. Bracewell, The Fourier Transform and its Applications, 3rd edition, McGraw-Hill, Boston (2000), 616pp.




\bibitem{Cadzow1979}
J. A. Cadzow. An extrapolation procedure for band-limited signals.  \textit{IEEE Transactions on Acoustics, Speech, and Signal Processing},  \textbf{27}(1) (1979), 4--12.

\bibitem{CF2014}
E. J. Cand\`es, 
C. Fernandez-Granda, Towards a mathematical theory of super-resolution,
\textit{Communications on Pure and Applied Mathematics}, \textbf{67} (2014), 906--956.


 
%


\bibitem{DT2019}
L. Demanet,  A. Townsend,
Stable Extrapolation of Analytic Functions,
\textit{Foundations of Computational Mathematics}, \textbf{19} (2019), 297--331.




%
%

\bibitem{Gerchberg1974}
R. W. Gerchberg, 
Superresolution through error energy reduction, 
\textit{Optica Acta: International Journal of Optics}, \textbf{21}(9) (1974), 709--720.
















\bibitem{HH2001}
P. H\"ahner, T. Hohage, New stability estimates for the inverse acoustic inhomogeneous medium problem and applications, 
\textit{SIAM Journal on Mathematical Analysis},  \textbf{33}(3) (2001), 670--685.

\bibitem{HW2017}
T.~Hohage, F.~Weidling, Variational source conditions and stability estimates for inverse electromagnetic medium scattering problems,
\textit{Inverse Problems and Imaging},  \textbf{11}(1) (2017), 203--220.





\bibitem{Isaev2013+}
M. Isaev,  Energy and regularity dependent stability
estimates for near-field inverse scattering in  multidimensions, \textit{Journal of Mathematics}, (2013), Article ID 318154, 10~pp.; DOI:10.1155/2013/318154.

\bibitem{Isaev2013++}
M. Isaev, Exponential instability in the inverse scattering problem on the energy interval,
\textit{Functional analysis and its applications},  \textbf{47}(3)  (2013), 187--194.


%


%

\bibitem{IN2013++}
M. Isaev, R.G. Novikov,  New global stability estimates for monochromatic
inverse acoustic scattering, \textit{SIAM Journal on Mathematical Analysis}, \textbf{45}(3) (2013), 1495--1504.

\bibitem{IN2014}
M. Isaev, R.G. Novikov,  Effectivized H\"older-logarithmic stability estimates for the Gel'fand inverse problem, \textit{Inverse Problems}, \textbf{30} (9) (2014),  095006.


\bibitem{Isakov2011}
V. Isakov, Increasing stability for the  Schr\"odinger potential from the Dirichlet-to-Neumann map,
\textit{Discrete and Continuous Dynamical Systems},  \textbf{4}(3)  (2011), 631--640.

%

\bibitem{Mandache2001}
 N. Mandache,
{Exponential instability in an inverse problem for the Schr\"odinger equation}, 
 \textit{Inverse Problems}, \textbf{17}  (2001), 1435--1444.  





\bibitem{LRC1987}
A. Lannes, S. Roques,  M.-J. Casanove. 
Stabilized reconstruction in signal and image
processing: I. partial deconvolution and spectral extrapolation with limited field. 
\textit{Journal of
modern Optics},  \textbf{34}(2) (1987), 161--226.

%

%
\bibitem{LRS1986}
M.M. Lavrent'ev,  V.G. Romanov, S.P. Shishatskii, 
Ill-posed problems of mathematical physics and analysis, Translated from the Russian by J. R. Schulenberger.   \textit{Translations of Mathematical Monographs}, \textbf{64} (1986).
American Mathematical Society,  Providence,  R.I.,  vi+290 pp.


\bibitem{Kolmogorov1959}
A.N. Kolmogorov, V.M. Tikhomirov, 
$\epsilon$-entropy and $\epsilon$-capacity in functional spaces, \textit{Uspekhi Matematicheskikh Nauk}, \textbf{14}(2) (1959)
3--86 (in Russian). English Translation,   \textit{American Mathematical Society Translations: Series 2}, \textbf{17} (1961), 277--364.





%
%
%


%


 

%


%




\bibitem{Novikov2011}
R.G. Novikov,
New global stability estimates for the Gel'fand-Calderon inverse problem, 
\textit{Inverse 
Problems},  \textbf{27} (2011), 015001,  21pp.
%

\bibitem{Novikov2020} R.G. Novikov, Multidimensional inverse scattering for the Schr\"odinger equation, Book series:
\textit{Springer Proceedings in Mathematics and Statistics.} Title of volume: Mathematical Analysis, its Applications and Computation - ISAAC 2019, Aveiro, Portugal, July 29-August 2; Editors: P. Cerejeiras, M. Reissig (to appear), e-preprint:
https://hal.archives-ouvertes.fr/hal-02465839v1.

%
%



\bibitem{Papoulis1975}
A. Papoulis, 
A new algorithm in spectral analysis and band-limited extrapolation. 
\textit{IEEE Transactions on Circuits and Systems},  \textbf{22}(9) (1975), 735--742.

%



 
 
 
 
\bibitem{Santacesaria2015}
M. Santacesaria, 
{A Holder-logarithmic stability estimate for an inverse problem in two dimensions},
\textit{Journal of Inverse and Ill-posed Problems}, \textbf{23}(1) (2015), 51--73.


\bibitem{Slepian1983}
D.~Slepian, 
Some comments on Fourier analysis, uncertainty and modeling,
\textit{SIAM Rev.} \textbf{25 }(1983), 379--393.

%

%

\bibitem{TA1977}
A.N.~Tikhonov,  V. Y.~Arsenin,  Solutions of ill-posed problems. 
  Washington : New York :  Winston ; distributed solely by Halsted Press, (1977), 258 p.


\bibitem{Tuan2000}
V.K.~Tuan, Stable analytic continuation using hypergeometric summation,
\textit{Inverse Problems}, \textbf{16}(1) (2000), 75--87.





\bibitem{Vessella1999}
S.~Vessella, A continuous dependence result in the analytic continuation
problem, Forum Mathematicum, \textbf{11}(6) (1999), 695--703.

\end{thebibliography}
\end{document}